\numberwithin{equation}{section}
\begin{document}
\theoremstyle{plain}
\newtheorem{thm}{Theorem}[section]
\newtheorem{theorem}[thm]{Theorem}
\newtheorem{lemma}[thm]{Lemma}
\newtheorem{corollary}[thm]{Corollary}
\newtheorem{proposition}[thm]{Proposition}
\newtheorem{conjecture}[thm]{Conjecture}
\theoremstyle{definition}
\newtheorem{remark}[thm]{Remark}
\newtheorem{remarks}[thm]{Remarks}
\newtheorem{definition}[thm]{Definition}
\newtheorem{example}[thm]{Example}

\newcommand{\caA}{{\mathcal A}}
\newcommand{\caB}{{\mathcal B}}
\newcommand{\caC}{{\mathcal C}}
\newcommand{\caD}{{\mathcal D}}
\newcommand{\caE}{{\mathcal E}}
\newcommand{\caF}{{\mathcal F}}
\newcommand{\caG}{{\mathcal G}}
\newcommand{\caH}{{\mathcal H}}
\newcommand{\caI}{{\mathcal I}}
\newcommand{\caJ}{{\mathcal J}}
\newcommand{\caK}{{\mathcal K}}
\newcommand{\caL}{{\mathcal L}}
\newcommand{\caM}{{\mathcal M}}
\newcommand{\caN}{{\mathcal N}}
\newcommand{\caO}{{\mathcal O}}
\newcommand{\caP}{{\mathcal P}}
\newcommand{\caQ}{{\mathcal Q}}
\newcommand{\caR}{{\mathcal R}}
\newcommand{\caS}{{\mathcal S}}
\newcommand{\caT}{{\mathcal T}}
\newcommand{\caU}{{\mathcal U}}
\newcommand{\caV}{{\mathcal V}}
\newcommand{\caW}{{\mathcal W}}
\newcommand{\caX}{{\mathcal X}}
\newcommand{\caY}{{\mathcal Y}}
\newcommand{\caZ}{{\mathcal Z}}
\newcommand{\fA}{{\mathfrak A}}
\newcommand{\fB}{{\mathfrak B}}
\newcommand{\fC}{{\mathfrak C}}
\newcommand{\fD}{{\mathfrak D}}
\newcommand{\fE}{{\mathfrak E}}
\newcommand{\fF}{{\mathfrak F}}
\newcommand{\fG}{{\mathfrak G}}
\newcommand{\fH}{{\mathfrak H}}
\newcommand{\fI}{{\mathfrak I}}
\newcommand{\fJ}{{\mathfrak J}}
\newcommand{\fK}{{\mathfrak K}}
\newcommand{\fL}{{\mathfrak L}}
\newcommand{\fM}{{\mathfrak M}}
\newcommand{\fN}{{\mathfrak N}}
\newcommand{\fO}{{\mathfrak O}}
\newcommand{\fP}{{\mathfrak P}}
\newcommand{\fQ}{{\mathfrak Q}}
\newcommand{\fR}{{\mathfrak R}}
\newcommand{\fS}{{\mathfrak S}}
\newcommand{\fT}{{\mathfrak T}}
\newcommand{\fU}{{\mathfrak U}}
\newcommand{\fV}{{\mathfrak V}}
\newcommand{\fW}{{\mathfrak W}}
\newcommand{\fX}{{\mathfrak X}}
\newcommand{\fY}{{\mathfrak Y}}
\newcommand{\fZ}{{\mathfrak Z}}

\newcommand{\bA}{{\mathbb A}}
\newcommand{\bB}{{\mathbb B}}
\newcommand{\bC}{{\mathbb C}}
\newcommand{\bD}{{\mathbb D}}
\newcommand{\bE}{{\mathbb E}}
\newcommand{\bF}{{\mathbb F}}
\newcommand{\bG}{{\mathbb G}}
\newcommand{\bH}{{\mathbb H}}
\newcommand{\bI}{{\mathbb I}}
\newcommand{\bJ}{{\mathbb J}}
\newcommand{\bK}{{\mathbb K}}
\newcommand{\bL}{{\mathbb L}}
\newcommand{\bM}{{\mathbb M}}
\newcommand{\bN}{{\mathbb N}}
\newcommand{\bO}{{\mathbb O}}
\newcommand{\bP}{{\mathbb P}}
\newcommand{\bQ}{{\mathbb Q}}
\newcommand{\bR}{{\mathbb R}}
\newcommand{\bT}{{\mathbb T}}
\newcommand{\bU}{{\mathbb U}}
\newcommand{\bV}{{\mathbb V}}
\newcommand{\bW}{{\mathbb W}}
\newcommand{\bX}{{\mathbb X}}
\newcommand{\bY}{{\mathbb Y}}
\newcommand{\bZ}{{\mathbb Z}}
\newcommand{\id}{{\rm id}}

\title[Colored HOMFLY polynomial via skein theory]{Colored HOMFLY polynomial via skein theory}
\author[Shengmao Zhu]{Shengmao Zhu}
\address{Center of Mathematical Sciences \\Zhejiang University \\Hangzhou, 310027, China }
\email{zhushengmao@gmail.com}

\begin{abstract}
In this paper, we study the properties of the colored HOMFLY
polynomials via HOMFLY skein theory. We prove some limit behaviors
and symmetries of the colored HOMFLY polynomial predicted in some
physicists' recent works.
\end{abstract}

\maketitle



\section{Introduction}
The HOMFLY polynomial is a two variables link invariants which was
first discovered by Freyd-Yetter, Lickorish-Millet, Ocneanu, Hoste
and Przytychi-Traczyk. In V. Jones's seminal paper \cite{Jones}, he
obtained the HOMFLY polynomial by studying the representation of
Heck algebra. Given a oriented link $\mathcal{L}$ in $S^3$, its
HOMFLY polynomial $P_{\mathcal{L}}(q,t)$ satisfies the following
crossing changing formula,
\begin{align}
tP_{\mathcal{L}_+}(q,t)-t^{-1}P_{\mathcal{L}_-}(q,t)=(q-q^{-1})P_{\mathcal{L}_0}(q,t)
\end{align}
Given an initial value $P_{\text{unknot}}(q,t)=1$, one can obtain
the HOMFLY polynomial for a given oriented link recursively through
the above formula (1.1).

According to V. Tureav's work \cite{Turaev}, the HOMFLY polynomial
can be obtained from the quantum invariants associated with the
fundamental representation of the quantum group $U_q(sl_N)$ by
letting $q^N=t$. From this view, it is natural to consider the
quantum invariants associated with arbitrary irreducible
representations of $U_q(sl_N)$. Let $q^N=t$, we call such two
variables invariants as the colored HOMFLY polynomials. See
\cite{LZ} for detail definition of the colored HOMFLY polynomials by
quantum group invariants of $U_q(sl_N)$.

The colored HOMFLY polynomial can also be obtained through the
satellite knot. Given a framed knot $\mathcal{K}$ and a diagram $Q$
in the skein $\mathcal{C}$ of the annulus, the satellite knot
$\mathcal{K}\star Q$ of $\mathcal{K}$ is constructed through drawing
$Q$ on the annular neighborhood of $\mathcal{K}$ determined by the
framing. We refer to this construction as decorating $\mathcal{K}$
with the pattern $Q$.

The skein $\mathcal{C}$ has a natural structure as the commutative
algebra. A subalgebra $\mathcal{C}^+\subset\mathcal{C}$ can be
interpreted as the ring of symmetric functions in infinitely many
variables $x_1,..,x_N,..$ \cite{Morton}. In this context, the Schur
function $s_\lambda(x_1,..,x_N,..)$ corresponding the basis element
$Q_\lambda\in \mathcal{C}^+$ obtained by taking the closure of the
idempotent elements $E_\lambda$ in Heck algebra \cite{L}.

According to Aiston, Lukac et al's work \cite{Ai,L2}, the colored
HOMFLY polynomial of $\mathcal{L}$ with $L$ components labeled by
the corresponding partitions $A^1,..,A^L$, can be identified through
the HOMFLY polynomial of the link $\mathcal{L}$ decorated by
$Q_{A^1},..,Q_{A^L}$. Denote $\vec{A}=(A^1,..,A^L)\in
\mathcal{P}^L$, the colored HOMFLY polynomial of the link
$\mathcal{L}$ can be defined by
\begin{align}
W_{\vec{A}}(\mathcal{L};q,t)=q^{-\sum_{\alpha=1}^Lk_{A^\alpha}w(\mathcal{K}_\alpha)}t^{-\sum_{\alpha=1}^{L}|A^\alpha|w(\mathcal{K}_\alpha)}
\langle\mathcal{L}\star\otimes_{\alpha=1}^LQ_{A^\alpha}\rangle
\end{align}
where $w(\mathcal{K}_\alpha)$ is the writhe number of the
$\alpha$-component $\mathcal{K}_\alpha$ of $\mathcal{L}$, the
bracket
$\langle\mathcal{L}\star\otimes_{\alpha=1}^LQ_{A^\alpha}\rangle$
denotes the framed HOMFLY polynomial of the satellite link
$\mathcal{L}\star\otimes_{\alpha=1}^LQ_{A^\alpha}$. See Section 4
for details.

In physics literature, the colored HOMFLY polynomials was described
as the path integral of the Wilson loops in Chern-Simons quantum
field theory \cite{Witten}. It makes the colored HOMFLY polynomials
stay among the central subjects of the modern mathematics and
physics. H. Itoyama, A. Mironov, A. Morozov, And. Morozov started a
program of systematic study of the colored HOMFLY polynomial with
some physical motivations, see \cite{IMMM1,IMMM2} and the references
in these papers. On one hand, they have obtained some explicit
formulas of colored HOMFLY polynomials for some special links. On
the other hand, they also proposed some conjectural formulas for the
structural properties of the general links. Given a knot
$\mathcal{K}$ and a partition $A\in \mathcal{P}$, they defined the
following special polynomials
\begin{align}
H_{A}^\mathcal{K}(t)=\lim_{q\rightarrow
1}\frac{W_{A}(\mathcal{K};q,t)}{W_{A}(\bigcirc;q,t)}
\end{align}
and its dual
\begin{align}
\Delta_{A}^\mathcal{K}(q)=\lim_{t\rightarrow
1}\frac{W_{A}(\mathcal{K};q,t)}{W_{A}(\bigcirc;q,t)}.
\end{align}
After some concrete calculations, they conjectured
\begin{align}
H_{A}^{\mathcal{K}}(t)=H_{(1)}^{\mathcal{K}}(t)^{|A|}\\
\Delta_{A}^{\mathcal{K}}(q)=\Delta_{(1)}^{\mathcal{K}}(q^{|A|}).
\end{align}
In Section 6, we give a proof of the formula (1.5) which is stated
in the following theorem:
\begin{theorem}
Given $\vec{A}=(A^1,..,A^{L})\in \mathcal{P}^L$ and a link
$\mathcal{L}$ with $L$ components $\mathcal{K}_\alpha,
\alpha=1,..,L$, then we have
\begin{align}
H_{\vec{A}}^{\mathcal{L}}(t)=\prod_{\alpha=1}^{L}H_{(1)}^{\mathcal{K}_\alpha}(t)^{|A^\alpha|}.
\end{align}
\end{theorem}
We mention that this theorem was first proved by K. Liu and P. Peng
\cite{LP1} by the cabling technique. Here, we give two independent
simple proofs via skein theory.

As to the formula (1.6), we show that it is incorrect in general,
and we find a counterexample: considering the partition $A=(22)$,
for the given torus knot $T(2,3)$, a direct calculation shows
\begin{align}
\Delta^{T(2,3)}_{(22)}(q)\neq \Delta^{T(2,3)}_{(1)}(q^4).
\end{align}

However, we still believe that the formula (1.6) holds for any knot
with a given hook partition $A$. In fact, we have proved the
following theorem.
\begin{theorem}
Given a torus knot $T(m,n)$, where $m$ and $n$ are relatively prime.
If $A$ is a hook partition. Then we have
\begin{align}
\Delta_{A}^{T(m,n)}(q)=\Delta_{(1)}^{T(m,n)}(q^{|A|}).
\end{align}
\end{theorem}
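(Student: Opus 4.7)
The plan is to establish Theorem 1.2 by a direct computation via the Rosso--Jones formula, reducing the claim to an identity among $q$-rational functions after isolating the hook-dominated terms in the $t\to 1$ limit.

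First, I would apply the Rosso--Jones formula, which expresses the colored HOMFLY of $T(m,n)$ as
$$W_A(T(m,n); q, t) \;=\; q^{\alpha} t^{\beta} \sum_{\mu \vdash m|A|} c^{\mu}_{A,m}\, q^{n \kappa_\mu/m}\, W_\mu(\bigcirc; q, t),$$
where $\alpha,\beta$ are the writhe-framing exponents, $\kappa_\mu = \sum_{(i,j)\in\mu} 2(j-i)$, and $c^\mu_{A,m}$ are the coordinates of the $m$-fold cabling of $Q_A$ in the basis $\{Q_\mu\}$ of the skein of the annulus (equivalently, the Schur-basis coefficients of the $m$-th Adams operation applied to $s_A$).

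Second, I would use the product formula $W_\mu(\bigcirc; q, t) = \prod_{(i,j) \in \mu}(tq^{c(i,j)} - t^{-1}q^{-c(i,j)})/(q^{h(i,j)} - q^{-h(i,j)})$ to control the $t \to 1$ asymptotics. Since the numerator $tq^c - t^{-1}q^{-c}$ vanishes at $t = 1$ exactly on the main diagonal of $\mu$, the unknot invariant $W_\mu(\bigcirc)$ vanishes to order $d(\mu)$, the Durfee square side of $\mu$. Because $A$ is a hook with $d(A) = 1$, only hook partitions $\mu$ of size $m|A|$ contribute nontrivially to the ratio $W_\mu(\bigcirc)/W_A(\bigcirc)$ in the limit. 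A direct computation then gives, for $A = (a+1, 1^b)$ and $\mu = (m|A|-k, 1^k)$,
$$\lim_{t \to 1} \frac{W_\mu(\bigcirc; q, t)}{W_A(\bigcirc; q, t)} \;=\; (-1)^{k-b}\, \frac{[|A|]}{[m|A|]}, \qquad \kappa_\mu = m(m|A| - 2k - 1).$$

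Third, I would evaluate the hook-to-hook plethysm coefficients $c^{(m|A|-k, 1^k)}_{A, m}$. Via the character identity $c^\mu_{A,m} = \sum_\rho z_\rho^{-1}\chi^A_\rho \chi^\mu_{m\rho}$, combined with the Murnaghan--Nakayama rule for hook characters and the Jacobi--Trudi expansion $s_A = \sum_{j=0}^{b} (-1)^j h_{a+1+j} e_{b-j}$, the hook coefficients can be packaged into a single generating function in an auxiliary variable. Comparing with the known explicit formula $\Delta_{(1)}^{T(m,n)}(q) = [mn]/([m][n])$ and substituting $q \mapsto q^{|A|}$ gives the right-hand side $[mn|A|]/([m|A|][n|A|])$, so the theorem reduces to verifying the weighted hook-sum identity.

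The principal obstacle is this final identity-matching step. I expect to resolve it either by a generating-function argument built from the hook character formula $\sum_k y^k \chi^{(n-k, 1^k)}_\rho = (1+y)^{-1}\prod_i((-y)^{\rho_i}-1)$ (and its consequence for $\chi^\mu_{m\rho}$ when $\mu$ is itself a hook), which factors both sides over the parts of $\rho$, or by a skein-theoretic lift exhibiting a natural element $\sum_k (-y)^k Q_{(m|A|-k, 1^k)}$ in the skein of the annulus whose decorated torus-knot evaluation already manifests the factorization into $[m|A|], [n|A|], [mn|A|]$.
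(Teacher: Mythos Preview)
Your plan is correct and follows essentially the same route as the paper: Rosso--Jones cabling, isolation of hook $\mu$ in the $t\to 1$ limit (you via the Durfee-square vanishing in the content/hook product formula, the paper equivalently via L'H\^opital on the Frobenius expansion of $s^*_\mu$, which singles out the one-part cycle type $\nu=(m|A|)$ and hence $\chi_\mu(C_{(m|A|)})\neq 0$), and then the hook-character generating identity applied twice together with character orthogonality---this is exactly the paper's Lemma~6.5, which dissolves your ``principal obstacle.'' Two small slips to correct before you write it up: for the hook $\mu=(m|A|-k,1^k)$ one has $\kappa_\mu=m|A|\,(m|A|-2k-1)$ (you dropped a factor of $|A|$), and your hook-character formula should read $(1+y)^{-1}\prod_i\bigl(1-(-y)^{\rho_i}\bigr)$ rather than $(1+y)^{-1}\prod_i\bigl((-y)^{\rho_i}-1\bigr)$.
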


In \cite{GS}, S. Gukov and M.
Sto$\check{\text{s}}$i$\acute{\text{c}}$ interpreted the knot
homology as the physical description of the space of the open BPS
states. They found a remarkable "mirror symmetries" by calculating
the colored HOMFLY homology for symmetric and anti-symmetric
representations. Decategorified version of these "mirror symmetries"
for colored HOMFLY homology leads to the symmetry of the colored
HOMFLY polynomial, see formula (5.20) in \cite{GS}. In fact, we have
proved the following symmetric property of the colored HOMFLY
polynomial $W_{\vec{A}}(\mathcal{L};q,t)$ for any link
$\mathcal{L}$.

\begin{theorem}
Given a link $\mathcal{L}$ with $L$ components and a partition
vector $\vec{A}=(A^1,..,A^L)\in \mathcal{P}^L$, we have the symmetry
\begin{align}
W_{\vec{A}}(\mathcal{L};q^{-1},t)=(-1)^{\|\vec{A}\|}W_{\vec{A}^{t}}(\mathcal{L};q,t).
\end{align}
\end{theorem}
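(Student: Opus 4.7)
The plan is to exhibit a skein-theoretic involution that simultaneously implements $q\mapsto q^{-1}$ and the partition transposition $\vec{A}\mapsto\vec{A}^{t}$, and then to verify that the signs and framing prefactors in the normalization~(1.2) conspire to produce the stated factor $(-1)^{\|\vec{A}\|}$. The central ingredient is a Hecke-algebra automorphism sending the Young idempotent $E_\lambda$ to $E_{\lambda^{t}}$, combined with the classical content identity $k_{\lambda^{t}}=-k_\lambda$.

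\textbf{Step 1 (Hecke involution).} The Hecke algebra $H_n(q)$ is generated by $g_1,\dots,g_{n-1}$ with the braid relations and $g_i^{2}=(q-q^{-1})g_i+1$. The substitution $g_i\mapsto-g_i^{-1}$ preserves both, defining an involution $\sigma_n$ of $H_n(q)$. By classical results in HOMFLY skein theory (Aiston--Morton, Lukac), $\sigma_n$ carries $E_\lambda$ to $E_{\lambda^{t}}$. At the diagrammatic level, $\sigma_n$ replaces every positive crossing by its inverse together with a factor $-1$. In the HOMFLY skein this effectively implements $z=q-q^{-1}\mapsto -z$, i.e.\ the substitution $q\mapsto q^{-1}$.

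\textbf{Step 2 (Bracket transformation and framing cancellation).} Take a braid representative of $\mathcal{L}$, cable its $\alpha$-th component $|A^\alpha|$ times, insert the idempotents $E_{A^\alpha}$, and close. Applying $\sigma_n$ to every crossing of this cabled braid produces an analogous diagram for $\mathcal{L}\star\otimes_\alpha Q_{(A^\alpha)^{t}}$, every crossing reversed, with an overall sign $(-1)^{C}$ where $C$ is the total crossing count. Split $C$ into crossings internal to each cable and crossings inherited from the diagram of $\mathcal{L}$ (cabled $|A^\alpha|$-fold). The writhe-dependent part of $(-1)^{C}$ is absorbed by the prefactor $q^{-k_{A^\alpha}w(\mathcal{K}_\alpha)}t^{-|A^\alpha|w(\mathcal{K}_\alpha)}$ in~(1.2), which is invariant under $q\mapsto q^{-1}$ and $A^\alpha\mapsto(A^\alpha)^{t}$ thanks to $k_{(A^\alpha)^{t}}=-k_{A^\alpha}$ and $|(A^\alpha)^{t}|=|A^\alpha|$. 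The remaining writhe-independent residue, coming from the closure of $\sigma_n(E_{A^\alpha})$ in the annulus, is precisely $\prod_{\alpha}(-1)^{|A^\alpha|}=(-1)^{\|\vec{A}\|}$.

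\textbf{Step 3 (Assembly and main obstacle).} Combining Steps 1 and 2 gives the identity~(1.10) directly. The delicate point is the sign bookkeeping in Step 2: one must cleanly separate the writhe-dependent contribution, which is exactly cancelled by the framing prefactor, from the partition-dependent residue, which is identified with $(-1)^{\|\vec{A}\|}$. This relies on the content identity $k_{\lambda^{t}}=-k_\lambda$ and on distinguishing crossings internal to a single cable from those inherited from the original diagram of $\mathcal{L}$. Once this separation is made rigorous, the three desired substitutions $q\mapsto q^{-1}$, $\vec{A}\mapsto\vec{A}^{t}$, and the overall sign $(-1)^{\|\vec{A}\|}$ fall out of the skein involution automatically.
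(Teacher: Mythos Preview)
Your proposal has a genuine gap in Step~1. The Hecke involution $\sigma_n:g_i\mapsto-g_i^{-1}$ is a $\Lambda$-linear automorphism of $H_n(q)$: it fixes the coefficient ring and in particular leaves $z=q-q^{-1}$ unchanged. Indeed, the skein relation $L_+-L_-=zL_0$ is sent to $(-L_-)-(-L_+)=zL_0$, which is the same relation. So $\sigma_n$ does \emph{not} implement $q\mapsto q^{-1}$. What it does implement, in the sense that $a_m\mapsto(\text{scalar})\cdot b_m$ and hence $E_\lambda\mapsto E_{\lambda^t}$ within the \emph{same} $H_n(q)$, is exactly the mechanism behind the easier companion symmetry $W_{\vec A}(\mathcal L;-q^{-1},t)=(-1)^{\sum_\alpha k_{A^\alpha}}W_{\vec A^t}(\mathcal L;q,t)$ (Theorem~1.4), since $q\mapsto-q^{-1}$ fixes $z$. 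Your Step~2 then inherits this confusion: switching all crossings diagrammatically is the mirror map, which on the bracket sends $(q,t)\mapsto(q^{-1},t^{-1})$, not $(q^{-1},t)$; and your sign split into ``writhe-dependent'' and ``residual $(-1)^{\|\vec A\|}$'' is asserted rather than derived.

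The paper's proof supplies precisely the missing ingredient that separates the $q\mapsto q^{-1}$ case from the $q\mapsto-q^{-1}$ case: the Lickorish--Millett expansion (Theorem~2.2, formula~(2.7)) shows that for any link diagram $D$ with $L$ components, $\langle D\rangle$ has only powers of $(q-q^{-1})$ of parity $-L\bmod 2$, so $\langle D\rangle(q^{-1},t)=(-1)^{L}\langle D\rangle(q,t)$ with $t$ fixed. One then expands each $Q_{(d)}=\alpha_d^{-1}\sum_{\pi}q^{l(\pi)}\widehat{\omega_\pi}$ and applies this parity to the pieces $\langle\mathcal K\star\widehat{\omega_{\pi_1}}\cdots\widehat{\omega_{\pi_l}}\rangle$, whose component count is $\sum_i l(c(\pi_i))$. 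The signs match the target $(-1)^{\sum d^i}$ precisely because of the combinatorial Lemma~7.3, $l(\pi)+l(c(\pi))\equiv d\pmod 2$, together with $\alpha_d|_{q\to q^{-1}}=\beta_d$. The passage from products of row (resp.\ column) patterns $Q_{(d^i)}$ to general $Q_\lambda$ is via the Jacobi--Trudi determinant, as in your Step~2. Your Hecke-involution framework could be salvaged, but only after importing this Lickorish--Millett parity and an equivalent of Lemma~7.3; without them the substitution $q\mapsto q^{-1}$ (as opposed to $q\mapsto-q^{-1}$) is simply not accounted for.
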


Theorem 1.3 was first proved by K. Liu and P. Peng \cite{LP1} which
was used to derive the conjectural structure of the colored HOMFLY
polynomial predicted by Labastida-Mari\~no-Ooguri-Vafa \cite{LMV}.
In Section 7, we give an independent simple proof of this theorem
via skein theory.

Lastly, by the construction of the pattern $Q_\lambda\in
\mathcal{C}$ and the definition of colored HOMFLY polynomial (1.2),
we derive the following symmetry directly.
\begin{theorem}
Given a link $\mathcal{L}$ with $L$ components and a partition
vector $\vec{A}=(A^1,..,A^L)\in \mathcal{P}^L$, we have
\begin{align}
W_{\vec{A}}(\mathcal{L};-q^{-1},t)=(-1)^{\sum_{\alpha=1}^Lk_{A^\alpha}}W_{\vec{A}^{t}}(\mathcal{L};q,t).
\end{align}
\end{theorem}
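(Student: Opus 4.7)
The plan is to substitute $q\mapsto -q^{-1}$ directly in the defining formula~(1.2) and track how each factor transforms. The prefactor $q^{-\sum_\alpha k_{A^\alpha}w(\mathcal{K}_\alpha)}$ becomes $(-1)^{\sum_\alpha k_{A^\alpha}w(\mathcal{K}_\alpha)}q^{\sum_\alpha k_{A^\alpha}w(\mathcal{K}_\alpha)}$, while the $t$-part is unaffected. For the bracket $\langle\mathcal{L}\star\otimes_\alpha Q_{A^\alpha}\rangle$, the key point is that both the HOMFLY skein relation $tP_+ - t^{-1}P_- = (q-q^{-1})P_0$ and the loop value $(t-t^{-1})/(q-q^{-1})$ are manifestly invariant under $q\mapsto -q^{-1}$; so evaluating the bracket after the substitution is equivalent to evaluating the bracket of the transformed decorations in the original skein.

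The central computation is to identify the transformed pattern $Q_\lambda(-q^{-1})$. By the construction of $Q_\lambda$ as the closure of the Young idempotent $E_\lambda$ in the Hecke algebra $H_n$, and since the Hecke relation $\sigma_i^2=(q-q^{-1})\sigma_i+1$ is itself invariant under $q\mapsto -q^{-1}$, the generator $\sigma_i$ represents the same skein element, but its two eigenvalues $\{q,-q^{-1}\}$ are interchanged by the substitution. This exchanges the symmetric and antisymmetric projectors on each pair of strands, and propagating it through the Jucys--Murphy inductive construction of $E_\lambda$ yields $E_\lambda(-q^{-1})=E_{\lambda^t}(q)$ in $H_n$. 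Closing up in the annulus gives $Q_\lambda(-q^{-1})=Q_{\lambda^t}(q)$ in $\mathcal{C}$. A direct verification on $H_2$ is illuminating: $E_{(2)}=(\sigma+q^{-1})/(q+q^{-1})$ is sent under $q\mapsto -q^{-1}$ to $(q-\sigma)/(q+q^{-1})=E_{(1,1)}$.

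Assembling everything and using $k_{\lambda^t}=-k_\lambda$ together with $|\lambda^t|=|\lambda|$, the transformed $q$- and $t$-prefactors reassemble into precisely the framing normalization of $W_{\vec{A}^t}(\mathcal{L};q,t)$, giving
\[
W_{\vec{A}}(\mathcal{L};-q^{-1},t)=(-1)^{\sum_\alpha k_{A^\alpha}w(\mathcal{K}_\alpha)}\,W_{\vec{A}^t}(\mathcal{L};q,t).
\]
In the standard convention $k_\lambda = 2\sum_{(i,j)\in\lambda}(j-i)$ is even, so the writhe-dependent sign coincides with $(-1)^{\sum_\alpha k_{A^\alpha}}$ (both trivially $+1$), matching the statement. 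The principal difficulty is establishing $E_\lambda(-q^{-1})=E_{\lambda^t}(q)$ in general; one natural route is to track how the quantum contents of standard Young tableaux transform under $q\mapsto -q^{-1}$, combined with the transposition bijection between tableaux of shape $\lambda$ and $\lambda^t$.
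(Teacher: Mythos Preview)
Your argument is correct, and the overall strategy---identify the effect of $q\mapsto -q^{-1}$ on the decorating pattern and then reassemble the framing prefactors using $k_{\lambda^t}=-k_\lambda$---is the same as the paper's. The difference lies in how the key identity $Q_\lambda(-q^{-1})=Q_{\lambda^t}(q)$ is established. You propose to prove it at the level of the Young idempotents $E_\lambda$ by tracking the swap of eigenvalues $q\leftrightarrow -q^{-1}$ through the Jucys--Murphy construction and quantum contents, and you rightly flag this as the principal difficulty. The paper instead bypasses this step entirely via the Jacobi--Trudi formula: since $Q_\lambda=\det(h_{\lambda_i+j-i})$ and $Q_{\lambda^t}=\det(e_{\lambda_i+j-i})$ with the \emph{same} index pattern, it suffices to check the row/column case $h_d\leftrightarrow e_d$, and that is immediate from the explicit closures of $a_d=\sum_\pi q^{l(\pi)}\omega_\pi$ and $b_d=\sum_\pi(-q)^{-l(\pi)}\omega_\pi$. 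So your conceptual route through the idempotents is valid but requires the extra work you describe, while the paper's determinant reduction makes the identity a one-line consequence of the explicit symmetrizer formulas. Your observation that the skein relations and loop value are invariant under $q\mapsto -q^{-1}$ is exactly what underlies the paper's passage from the row/column identity to the decorated-bracket identity, so the two arguments merge at that point.
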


The rest of this paper is organized as follows. In Section 2, we
gives a brief account of the HOMFLY skein theory and introduce some
basic properties of the HOMFLY polynomial which will be used in
Section 6. In Section 3, we gather some definitions and formulas
related to partitions and symmetric functions, then we introduce the
basic elements in the skein of annulus $\mathcal{C}^+$, such as the
Turaev's basis and symmetric function basis of $\mathcal{C}^+$. In
Section 4, we give the definition of the colored HOMFLY polynomials
as the HOMFLY satellite invariants decorated by the elements in
HOMFLY skein of annulus $\mathcal{C}^+$. In Section 5, we introduce
the skein theory descriptions of the colored HOMFLY polynomials of
torus links as showed by Morton and Manchon \cite{MM} and gives the
explicit formula of colored HOMFLY polynomial for torus links
obtained by X. Lin and H. Zheng \cite{LZ}. In Section 6, we
introduce the definition of the "special polynomials" and prove the
Theorem 1.1 and Theorem 1.2 about the properties of these special
polynomials. In the last Section 7, we study the symmetries of the
colored HOMFLY polynomials and prove the Theorem 1.3 and Theorem
1.4.
\\

{\bf Acknowledgements.} This work was supported by China
Postdoctoral Science Foundation 2011M500986. The author would like
to thank Professor Kefeng Liu for bringing their unpublished paper
\cite{LP3} to his attention. Their work motivates the author to
study the HOMFLY skein theory.

\section{The Skein models}
Given a planar surface $F$, the framed HOMFLY skein $\mathcal{S}(F)$
of $F$ is the $\Lambda$-linear combination of orientated tangles in
$F$, modulo the two local relations as showed in Figure 1, where
$z=q-q^{-1}$,
\begin{figure}
\begin{center}
\includegraphics[width=200 pt]{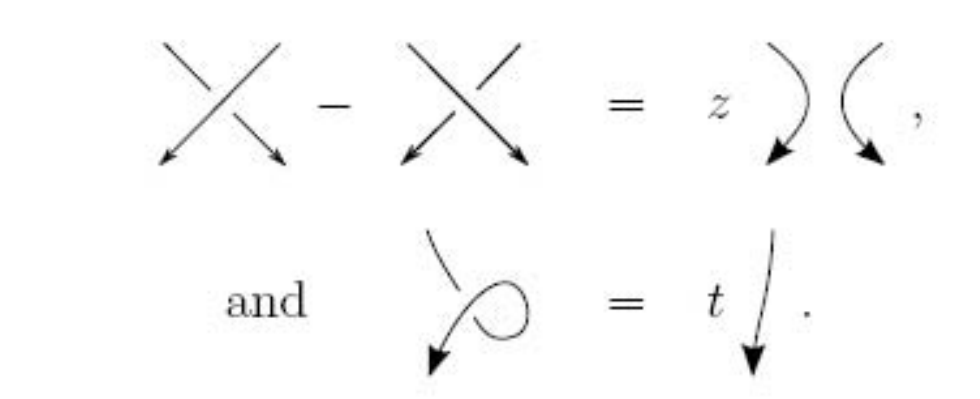}
\caption{}
\end{center}
\end{figure}
the coefficient ring $\Lambda=\mathbb{Z}[q^{\pm 1}, t^{\pm 1} ]$
with the elements $q^{k}-q^{-k}$ admitted as denominators for $k\geq
1$. The local relation showed in Figure 2
\begin{figure}
\begin{center}
\includegraphics[width=160 pt]{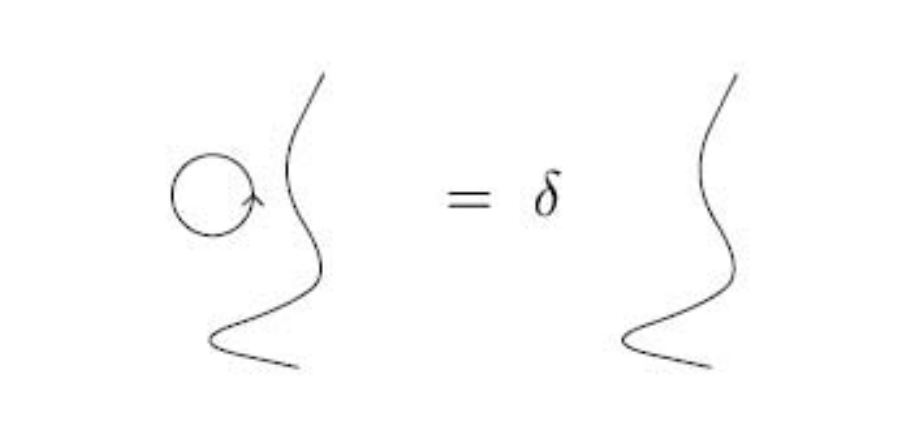}
\caption{}
\end{center}
\end{figure}
is a consequence of the above relations. It follows that the removal
of a null-homotopic closed curve without crossings is equivalent to
time a scalar $\delta=\frac{t-t^{-1}}{q-q^{-1}}$.

\subsection{The plane}
When $F=\mathbb{R}^2$, it is easy to follow that every element in
$\mathcal{S}(F)$ can be represented as a scalar in $\Lambda$. For a
link $\mathcal{L}$ with  diagram $D_{\mathcal{L}}$ the resulting
scalar $\langle D_{\mathcal{L}} \rangle \in \Lambda$ is the framed
HOMFLY polynomial of link $\mathcal{L}$. In the following, we will
also use the notation $\mathcal{L}$ to denote the $D_{\mathcal{L}}$
for simplicity. The unreduced HOMFLY polynomial is obtained by
\begin{align}
\overline{P}_\mathcal{L}(q,t)=t^{-w(\mathcal{L})}\langle \mathcal{L}
\rangle
\end{align}
where $w(\mathcal{L})$ is the writhe of the link $\mathcal{L}$. In
particular,
\begin{align}
\overline{P}_{\bigcirc}(q,t)=\langle\bigcirc\rangle=\frac{t-t^{-1}}{q-q^{-1}}.
\end{align}
The HOMFLY polynomial is defined by
\begin{align}
P_{\mathcal{L}}(q,t)=\frac{t^{-w(\mathcal{L})}\langle
\mathcal{L}\rangle}{\langle\bigcirc\rangle}.
\end{align}
Particularly, $P_{\bigcirc}(q,t)=1$.
\begin{remark}
In some physical literatures, such as \cite{Mar}, the self-writhe
$\bar{w}(\mathcal{L})$ instead of $w(\mathcal{L})$ is used in the
definition of the HOMFLY polynomial (2.1) and (2.2). The
relationship between them is
\begin{align}
w(\mathcal{L})=\bar{w}(\mathcal{L})-2lk(\mathcal{L})
\end{align}
where $lk(\mathcal{L})$ is the total linking number of the link
$\mathcal{L}$.
\end{remark}

A classical result by Lichorish and Millet \cite{LM} showed that for
a given link $\mathcal{L}$ with $L$ components, the lowest power of
$q-q^{-1}$ in the HOMFLY polynomial $P_{\mathcal{L}}(q,t)$ is $1-L$.
In fact, they proved the following theorem.
\begin{theorem}[Lickorish-Millett] Let $\mathcal{L}$ be a link with
$L$ components. Its HOMFLY polynomial has the following expansion
\begin{align}
P_\mathcal{L}(q,t)=\sum_{g\geq
0}p^\mathcal{L}_{2g+1-L}(t)(q-q^{-1})^{2g+1-L}
\end{align}
 which satisfies
\begin{align}
p_{1-L}^{\mathcal{L}}(t)=t^{2lk(\mathcal{L})}(t-t^{-1})^{L-1}\prod_{\alpha=1}^{L}p_0^{\mathcal{K}_\alpha}(t)
\end{align}
where $p_0^{\mathcal{K}_\alpha}(t)$ is the HOMFLY polynomial of the
$\alpha$-th component of the link $\mathcal{L}$ with $q=1$, i.e.
$p_0^{\mathcal{K}_\alpha}(t)=P_{\mathcal{K}_\alpha}(1,t)$.
\end{theorem}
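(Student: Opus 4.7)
The plan is to prove the Lickorish--Millett theorem in two stages: first establish the Laurent expansion form of $P_{\mathcal{L}}(q,t)$, then compute the leading coefficient $p_{1-L}^{\mathcal{L}}(t)$, via a skein induction that reduces the general link to a split link.

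For the expansion, set $z = q - q^{-1}$ and define $F_{\mathcal{L}}(z,t) := z^{L-1} P_{\mathcal{L}}(q,t)$; the claim is equivalent to $F_{\mathcal{L}} \in \mathbb{Z}[t^{\pm 1}][z^2]$, i.e.\ no negative or odd powers of $z$ after this normalization. I proceed by induction on diagram complexity (say, crossing number plus component count) using the rewritten skein relation $P_{\mathcal{L}_+} = t^{-2} P_{\mathcal{L}_-} + t^{-1} z\, P_{\mathcal{L}_0}$. When the crossing is mixed, $\mathcal{L}_0$ has $L-1$ components, and by induction $z P_{\mathcal{L}_0}$ contributes only powers $z^{3-L}, z^{5-L}, \ldots$, consistent with the required expansion for $P_{\mathcal{L}_+}$; when the crossing is a self-crossing, $\mathcal{L}_0$ has $L+1$ components and an analogous parity/bound check applies. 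The base case is a disjoint union of $L$ unknots, where $P_{\mathcal{L}} = \delta^{L-1} = (t-t^{-1})^{L-1} z^{1-L}$ is manifestly of the required form with only the $g=0$ term.

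For the leading coefficient, I argue in three steps. (a) For a split link $\mathcal{K}_1 \sqcup \cdots \sqcup \mathcal{K}_L$, the skein multiplicativity $P_{\mathcal{L}_1 \sqcup \mathcal{L}_2} = \delta\, P_{\mathcal{L}_1} P_{\mathcal{L}_2}$ (a direct consequence of the HOMFLY skein on the plane) iterates to $P_{\mathcal{L}} = \delta^{L-1} \prod_\alpha P_{\mathcal{K}_\alpha}$; extracting the $z^{1-L}$ coefficient gives $(t-t^{-1})^{L-1} \prod_\alpha p_0^{\mathcal{K}_\alpha}(t)$, which agrees with the claimed formula since $lk(\mathcal{L}) = 0$ for a split link. (b) Multiplying the skein relation by $z^{L-1}$ in the mixed-crossing case yields $t F_{\mathcal{L}_+} - t^{-1} F_{\mathcal{L}_-} = z^2 F_{\mathcal{L}_0}$; evaluating at $z = 0$ gives $p_{1-L}^{\mathcal{L}_+}(t) = t^{-2}\, p_{1-L}^{\mathcal{L}_-}(t)$, which combined with the linking-number change $lk(\mathcal{L}_+) - lk(\mathcal{L}_-) = -1$ (in the sign convention consistent with the paper's skein relation) shows that the ratio $p_{1-L}^{\mathcal{L}}(t)/t^{2\, lk(\mathcal{L})}$ is invariant under mixed crossing changes. (c) Every link can be transformed into a split link of its constituent components by finitely many mixed crossing changes; combining with the invariance from (b) reduces the general case to (a).

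The main obstacle is step (c): one must show topologically that every link can be split via mixed crossing changes alone while preserving the isotopy type of each component $\mathcal{K}_\alpha$. This follows from the classical observation that by assigning each component its own height slab in $\mathbb{R}^3$ -- equivalently, forcing one component to always pass over another at every mixed crossing -- only crossings between distinct components are altered, and the self-crossings of each $\mathcal{K}_\alpha$ are left untouched. A secondary subtlety in step (b) is a careful bookkeeping of the sign of the linking-number change so that the factor $t^{-2}$ produced by the skein calculation matches $t^{2\,\Delta lk}$, thereby forcing the exponent $2\, lk(\mathcal{L})$ (rather than $-2\, lk(\mathcal{L})$) in the final formula.
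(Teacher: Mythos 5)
First, a point of comparison that matters: the paper does not prove this theorem at all --- it is quoted verbatim from Lickorish--Millett \cite{LM} (``In fact, they proved the following theorem''), so there is no in-paper argument to match. Your proposal is essentially a reconstruction of the original Lickorish--Millett skein induction, and its architecture is sound: the rewritten relation $P_{\mathcal{L}_+}=t^{-2}P_{\mathcal{L}_-}+t^{-1}zP_{\mathcal{L}_0}$ with the parity/bound check in the mixed and self-crossing cases correctly yields the expansion (2.5); the identity $tF_{\mathcal{L}_+}-t^{-1}F_{\mathcal{L}_-}=z^2F_{\mathcal{L}_0}$ at a mixed crossing (valid because $\mathcal{L}_0$ then has $L-1$ components, so $z^{L-1}P_{\mathcal{L}_0}=z^2F_{\mathcal{L}_0}$ with $F_{\mathcal{L}_0}$ regular at $z=0$) gives $p_{1-L}^{\mathcal{L}_+}=t^{-2}p_{1-L}^{\mathcal{L}_-}$; the height-slab splitting argument in (c) is the correct classical device and rightly preserves each component's knot type; and the split-link computation via $P_{\mathcal{L}_1\sqcup\mathcal{L}_2}=\delta P_{\mathcal{L}_1}P_{\mathcal{L}_2}$ anchors the formula. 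Your explicit flag of the sign bookkeeping is also warranted rather than pedantic: with the relation (1.1) as printed and the usual right-handed-positive convention one lands on $t^{-2lk(\mathcal{L})}$, so the stated exponent $+2lk(\mathcal{L})$ only emerges under the convention $lk(\mathcal{L}_+)-lk(\mathcal{L}_-)=-1$ that you adopt; which convention the paper intends is fixed only by its Figure 1, and your matching of $t^{-2}$ against $t^{2\,\Delta lk}$ is exactly the consistency check needed.

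The one concrete defect is your induction measure in stage one. ``Crossing number plus component count'' does not decrease along either branch of the skein relation: the switched term $\mathcal{L}_-$ has the same crossing number and the same number of components as $\mathcal{L}_+$, and a self-crossing smoothing gives $c-1$ crossings but $L+1$ components, so the measure is unchanged in both cases and the recursion as literally stated is not well-founded. The standard repair, which is what Lickorish and Millett actually do, is a double induction: the outer parameter is the crossing number (the smoothed term $\mathcal{L}_0$ always drops it by one, whatever happens to the component count), and the inner parameter is the number of crossings one must switch to bring the diagram to a totally descending diagram with respect to a fixed ordering of components and basepoints; switching a ``bad'' crossing decreases this inner parameter at fixed crossing number, and totally descending diagrams are split unlinks with $P=\delta^{L-1}=(t-t^{-1})^{L-1}z^{1-L}$, which is precisely your base case. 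With that substitution your outline closes up into a complete and correct proof.
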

By the definition in our notation (2.3), we have
\begin{align}
\langle\mathcal{L} \rangle=\sum_{g\geq
0}\hat{p}^{\mathcal{L}}_{2g+1-L}(t)(q-q^{-1})^{2g-L}
\end{align}
where
$\hat{p}^{\mathcal{L}}_{2g+1-L}(t)=t^{w(\mathcal{L})}p^{\mathcal{L}}_{2g+1-L}(t)(t-t^{-1})$.
Hence
\begin{align}
\hat{p}^{\mathcal{L}}_{1-L}(t)=t^{\bar{w}(\mathcal{L})}(t-t^{-1})^L\prod_{\alpha=1}^{L}p_{0}^{\mathcal{K}_\alpha}(t)
\end{align}
by the formulas (2.4) and (2.6).

We also need another important property of HOMFLY polynomial.
Denoted by $\mathcal{K}_1\#\mathcal{K}_2$ the connected sum of two
knots $\mathcal{K}_1$ and $\mathcal{K}_2$, then we have
\begin{align}
P_{\mathcal{K}_1
\#\mathcal{K}_2}(q,t)=P_{\mathcal{K}_1}(q,t)P_{\mathcal{K}_2}(q,t).
\end{align}
It is equivalent to say
\begin{align}
\frac{\langle\mathcal{K}_1\#
\mathcal{K}_2\rangle}{\langle\bigcirc\rangle}=\frac{\langle
\mathcal{K}_1\rangle}{\langle\bigcirc\rangle}\frac{\langle\mathcal{K}_2\rangle}{\langle
\bigcirc\rangle}.
\end{align}

\subsection{The rectangle}
When $F$ is a rectangle with $n$ inputs at the top and $n$ outputs
at the bottom. Let $H_n$ be the skein $\mathcal{S}(F)$ of
$n$-tangles. Composing $n$-tangles by placing one above another
induces a product which makes $H_n$ into the Hecke algebra $H_n(z)$
with the coefficients ring $\Lambda$, where $z=q-q^{-1}$. $H_n(z)$
has a presentation generated by the elementary brads $\sigma_i$
subjects to the braid relations
\begin{align}
\sigma_i\sigma_{i+1}\sigma_i=\sigma_{i+1}\sigma_i\sigma_{i+1}\\\nonumber
\sigma_i\sigma_j=\sigma_j\sigma_i, |i-j|\geq 1.
\end{align}
 and the quadratic relations $\sigma_i^2=z\sigma_i+1$.

\subsection{The annulus}
When $F=S^1\times I$ is the annulus, we let
$\mathcal{C}=\mathcal{S}(S^1\times I)$. The skein $\mathcal{C}$ has
a product induced by placing one annulus outside another, under
which $\mathcal{C}$ becomes a commutative algebra. Turaev showed
that $\mathcal{C}$ is freely generated as an algebra by the set
$\{A_m: m\in \mathbb{Z}\}$ where $A_m, m\neq 0$ is represented by
the closure of the braid $\sigma_{|m|-1}\cdots \sigma_2\sigma_1$.
The orientation of the curve around the annulus is counter-clockwise
for positive $m$ and clockwise for negative $m$. The element $A_0$
is the identity element and is represented by the empty diagram.
Thus the algebra $\mathcal{C}$ is the product of two subalgebras
$\mathcal{C}^{+}$ and $\mathcal{C}^{-}$ generated by $\{A_m: m\in
\mathbb{Z}, m\geq 0\}$ and $\{A_m:m\in \mathbb{Z}, m\leq 0\}$.

The closure map $H_n \rightarrow \mathcal{C}^+$, induced by taking
an $n$-tangle $T$ to its closure $\hat{T}$ is a $\Lambda$-linear
map, whose image is denoted by $\mathcal{C}_n$. Thus
$\mathcal{C}^+=\cup_{n\geq 0}\mathcal{C}_n$. There is a good basis
of $\mathcal{C}^+$ consisting of closures of certain idempotents of
$H_n$. In fact, the linear subspace $\mathcal{C}_n$ has a useful
interpretation as the space of symmetric polynomials of degree $n$
in variables $x_1,..,x_N$, for large enough $N$. $\mathcal{C}^+$ can
be viewed as the algebra of the symmetric functions.
\subsection{Involution on the skein of $F$}
The mirror map in the skein of $F$ is defined as the conjugate
linear involution $\bar{\quad}$ on the skein of $F$ induced by
switching all crossings on diagrams and inverting $q$ and $t$ in
$\Lambda$. Thus $\bar{z}=-z$.

\section{Basic elements in the skein of annulus $\mathcal{C}^+$}
\subsection{Partition and symmetric function}
A partition $\lambda$ is a finite sequence of positive integers
$(\lambda_1,\lambda_2,..)$ such that
\begin{align}
\lambda_1\geq \lambda_2\geq\cdots
\end{align}
The length of $\lambda$ is the total number of parts in $\lambda$
and denoted by $l(\lambda)$. The degree of $\lambda$ is defined by
\begin{align}
|\lambda|=\sum_{i=1}^{l(\lambda)}\lambda_i.
\end{align}
If $|\lambda|=d$, we say $\lambda$ is a partition of $d$ and denoted
as $\lambda\vdash d$. The automorphism group of $\lambda$, denoted
by Aut($\lambda$), contains all the permutations that permute parts
of $\lambda$ by keeping it as a partition. Obviously, Aut($\lambda$)
has the order
\begin{align}
|\text{Aut}(\lambda)|=\prod_{i=1}^{l(\lambda)}m_i(\lambda)!
\end{align}
where $m_i(\lambda)$ denotes the number of times that $i$ occurs in
$\lambda$. We can also write a partition $\lambda$ as
\begin{align}
\lambda=(1^{m_1(\lambda)}2^{m_2(\lambda)}\cdots).
\end{align}

Every partition can be identified as a Young diagram. The Young
diagram of $\lambda$ is a graph with $\lambda_i$ boxes on the $i$-th
row for $j=1,2,..,l(\lambda)$, where we have enumerate the rows from
top to bottom and the columns from left to right.

Given a partition $\lambda$, we define the conjugate partition
$\lambda^t$ whose Young diagram is the transposed Young diagram of
$\lambda$ which is derived from the Young diagram of $\lambda$ by
reflection in the main diagonal.

Denote by $\mathcal{P}$ the set of all partitions. We define the
$n$-th Cartesian product of $\mathcal{P}$ as
$\mathcal{P}^n=\mathcal{P}\times \cdots \times\mathcal{P}$. The
elements in $\mathcal{P}^n$ denoted by $\vec{A}=(A^1,..,A^n)$ are
called partition vectors.

The following numbers associated with a given partition $\lambda$
are used frequently in this paper:
\begin{align}
z_\lambda&=\prod_{j=1}^{l(\lambda)}j^{m_{j}(\lambda)}m_j(\lambda)!,\\
k_{\lambda}&=\sum_{j=1}^{l(\lambda)}\lambda_j(\lambda_j-2j+1).
\end{align}
Obviously, $k_\lambda$ is an even number and
$k_\lambda=-k_{\lambda^t}$.

The $m$-th complete symmetric function $h_m$ is defined by its
generating function
\begin{align}
H(t)=\sum_{m\geq 0}h_mt^m=\prod_{i\geq 1}\frac{1}{(1-x_it)}.
\end{align}
The $m$-th elementary symmetric function $e_m$ is defined by its
generating function
\begin{align}
E(t)=\sum_{m\geq 0}e_mt^m=\prod_{i\geq 1}(1+x_it).
\end{align}
Obviously,
\begin{align}
H(t)E(-t)=1.
\end{align}

The power sum symmetric function of infinite variables
$x=(x_1,..,x_N,..)$ is defined by
\begin{align}
p_{n}(x)=\sum_{i}x_i^n.
\end{align}
Given a partition $\lambda$, define
\begin{align}
p_\lambda(x)=\prod_{j=1}^{l(\lambda)}p_{\lambda_j}(x).
\end{align}
The Schur function $s_{\lambda}(x)$ is determined by the Frobenius
formula
\begin{align}
s_\lambda(x)=\sum_{|\mu|=|\lambda|}\frac{\chi_{\lambda}(C_\mu)}{z_\mu}p_\mu(x).
\end{align}
where $\chi_\lambda$ is the character of the irreducible
representation of the symmetric group $S_{|\mu|}$ corresponding to
$\lambda$. $C_\mu$ denotes the conjugate class of symmetric group
$S_{|\mu|}$ corresponding to partition $\mu$. The orthogonality of
character formula gives
\begin{align}
\sum_A\frac{\chi_A(C_\mu) \chi_A(C_\nu)}{z_\mu}=\delta_{\mu \nu}.
\end{align}

We also have the following Giambelli (or Jacobi-Trudi) formula:
\begin{align}
s_\lambda(x)&=\det(h_{\lambda_i-i+j})_{1\leq i,j\leq
l(\lambda)}\\\nonumber &=\det(e_{\lambda^t_i-i+j})_{1\leq i,j\leq
l(\lambda^t)}.
\end{align}

\subsection{Turaev's geometrical basis of $\mathcal{C}^+$}
The element $A_m\in \mathcal{C}^+$ is the closure of the braid
$\sigma_{m-1}\cdots \sigma_2\sigma_1\in H_m$. Its mirror image
$\bar{A}_m$ is the closure of the braid $\sigma_{m-1}^{-1}\cdots
\sigma_2^{-1}\sigma_1^{-1}$. Given a partition
$\lambda=(\lambda_1,..,\lambda_{l})$ of $m$ with length $l$, we
define the monomial $A_\lambda=A_{\lambda_1}\cdots A_{\lambda_{l}}$.
Then the monomials $\{A_\lambda\}_{\lambda \vdash m}$ becomes a
basis of $\mathcal{C}_m$ which is called the Turaev's geometric
basis of $\mathcal{C}^+$.

Moreover, let $A_{i,j}$ be the closure of the braid
$\sigma_{i+j}\sigma_{i+j-1}\cdots \sigma_{j+1}\sigma_{j}^{-1}\cdots
\sigma_1^{-1}$. We define the element $X_m$ in $\mathcal{C}_m$ as
the sum of $m$ closed $m$-braids
\begin{align}
X_m=\sum_{j=0}^{m-1}A_{m-1-j,j}.
\end{align}
There exist some explicit geometric relations between the elements
$\bar{A}_m$, $A_m$ and $X_m$ \cite{MM}.

\subsection{Symmetric function basis of $\mathcal{C}^+$}
The subalgebra $\mathcal{C}^+\subset \mathcal{C}$ can be interpreted
as the ring of symmetric functions in infinite variables
$x_1,..,x_N,..$ \cite{L}. In this subsection, we introduce the
elements in $\mathcal{C}^+$ representing the complete and elementary
symmetric functions $h_m$, $e_m$ and power sum $P_m$.

Given a permutation $\pi\in S_m$ with the length $l(\pi)$, let
$\omega_\pi$ be the positive permutation braid associated to $\pi$.
We define two basis quasi-idempotent elements in $H_m$:
\begin{align}
a_m=\sum_{\pi\in S_m}q^{l(\pi)}\omega_{\pi}\\
b_m=\sum_{\pi\in S_m}(-q)^{-l(\pi)}\omega_{\pi}
\end{align}

The element $h_m\in \mathcal{C}_m$ which represents the complete
symmetric function with degree $m$, is the closure of the elements
$\frac{1}{\alpha_m}a_m\in H_m$. Where $\alpha_m$ is determined by
the equation $a_ma_m=\alpha_m a_m$, it gives
$\alpha_m=q^{m(m-1)/2}\prod_{i=1}^m\frac{q^i-q^{-i}}{q-q^{-1}}$.
Similarly, the closure of the element $\frac{b_m}{\beta_m}$ gives
the elements $e_m\in \mathcal{C}_m$ represents the elementary
symmetric function, where $\beta_m=\alpha_m|_{q\rightarrow
-q^{-1}}$. $\{h_n\}$ generates the skein module $\mathcal{C}^+$, and
the monomial $h_{\lambda}$, where $|\lambda|=m$, form a basis for
$\mathcal{C}_m$. Then $\mathcal{C}^+$ can be regarded as the ring of
symmetric functions in variables $x_1,..,x_N,..$ with the
coefficient ring $\Lambda$. In this situation, $\mathcal{C}_m$
consists of the homogeneous functions of degree $m$.

The power sum $P_m=\sum x_i^m$ are symmetric functions which can be
represented in terms of the complete symmetric functions, hence
$P_m\in \mathcal{C}_m$. Moreover, we have the identity
\begin{align}
\{m\}P_m=X_m.
\end{align}
where $\{m\}=\frac{q^m-q^{-m}}{q-q^{-1}}$. Denoted by $Q_{\lambda}$
the closures of Aiston's idempotent elements $e_{\lambda}$ in the
Hecke algebra $H_m$. It was showed by Lukac \cite{L} that
$Q_\lambda$ represent the Schur functions in the interpretation as
symmetric functions. Hence
\begin{align}
Q_\lambda=\det(h_{\lambda_i+j-i})_{1\leq i,j \leq l}
\end{align}
where $\lambda=(\lambda_1,..,\lambda_l)$. In particularly, we have
$Q_{\lambda}=h_m$, when $\lambda=(m)$ is a row partition, and
$Q_{\lambda}=e_m$ when $\lambda=(1,...,1)$ is a column partition.
$\{Q_{\lambda}\}_{\lambda\vdash m}$ forms a basis of
$\mathcal{C}_m$. Furthermore, the Frobenius formula (3.12) gives:
\begin{align}
Q_\lambda=\sum_{\mu}\frac{\chi_{\lambda}(C_\mu)}{z_{\mu}}P_{\mu}
\end{align}
where
\begin{align}
P_{\mu}=\prod_{i=1}^{l(\mu)}P_{\mu_i}.
\end{align}

\section{Colored HOMFLY polynomials}
Let $\mathcal{L}$ be a framed link with $L$ components with a fixed
numbering. For diagrams $Q_1,..,Q_L$ in the skein model of annulus
with the positive oriented core $\mathcal{C}^+$, we define the
decoration of $\mathcal{L}$ with $Q_1,..,Q_L$ as the link
\begin{align}
\mathcal{L}\star \otimes_{i=1}^{L} Q_i
\end{align}
which derived from $\mathcal{L}$ by replacing every annulus
$\mathcal{L}$ by the annulus with the diagram $Q_i$ such that the
orientations of the cores match. Each $Q_i$ has a small backboard
neighborhood in the annulus which makes the decorated link
$\mathcal{L}\otimes_{i=1}^{L}Q_i$ into a framed link.

The framed colored HOMFLY polynomial of $\mathcal{L}$ is defined to
be the framed HOMFLY polynomial of the decorated link
$\mathcal{L}\star\otimes_{i=1}^{L}Q_i$, i.e.
\begin{align}
\langle \mathcal{L}\star\otimes_{i=1}^{L}Q_i \rangle.
\end{align}

In particular, when $Q_{A^\alpha}\in \mathcal{C}_{d_\alpha}$, where
$A^\alpha$ is the partition of a positive integer $d_\alpha$, for
$\alpha=1,..,L$. We add a framing factor to eliminate the framing
dependency. It makes the framed colored HOMFLY polynomial $ \langle
\mathcal{L}\otimes_{\alpha=1}^{L}Q_{A^\alpha}\rangle $ into a
framing independent invariant which is given by
\begin{align}
W_{\vec{A}}(\mathcal{L};q,t)=q^{-\sum_{\alpha=1}^{L}\kappa_{A^\alpha}|w(\mathcal{K_\alpha})|}
t^{-\sum_{\alpha=1}^{L}|A^\alpha|w(\mathcal{K}_\alpha)}\langle
\mathcal{L}\star\otimes_{\alpha=1}^{L}Q_{A^{\alpha}} \rangle
\end{align}
where $\vec{A}=(A^1,..,A^L)\in \mathcal{P}^L$.

From now on, we will call $W_{\vec{A}}(\mathcal{L};q,t)$ the colored
HOMFLY polynomial of link $\mathcal{L}$ with the color
$\vec{A}=(A^1,..,A^L)$.

\begin{example}
The following examples are some special cases of the colored HOMFLY
polynomials of links.

(1). The unknot $\bigcirc$,
\begin{align}
W_A(\bigcirc;q,t)=\langle
Q_A\rangle=s_A^*(q,t)=\sum_{B}\frac{\chi_{A}(C_B)}{z_B}\prod_{j=1}^{l(B)}\frac{t^{B_j}-t^{-B_j}}{q^{B_{j}}-q^{-B_j}}
\end{align}

(2). When $A^1=A^2=\cdots=A^L=(1)$,
\begin{align}
W_{((1),..,(1))}(\mathcal{L};q,t)&=t^{-\sum_\alpha
w(\mathcal{K}_\alpha)}\langle\mathcal{L}\rangle\\\nonumber
&=t^{-2lk(\mathcal{L})}\langle\bigcirc\rangle
t^{-w(\mathcal{L})}\frac{\langle
\mathcal{L}\rangle}{\langle\bigcirc\rangle}\\\nonumber
&=t^{-2lk(\mathcal{L})}\left(\frac{t-t^{-1}}{q-q^{-1}}\right)P_\mathcal{L}(q,t).
\end{align}

(3). When $\mathcal{L}$ is the disjoint union of $L$ knots, i.e.
$\mathcal{L}=\otimes_{\alpha=1}^L \mathcal{K}_\alpha$,
\begin{align}
W_{(A^1,..,A^L)}(\mathcal{L};q,t)&=q^{-\sum
k_{A^{\alpha}}w(\mathcal{K}_\alpha)}t^{-\sum
|A^\alpha|w(\mathcal{K}_\alpha)}\langle\otimes_\alpha
\mathcal{K}_\alpha \star Q_{A^\alpha}\rangle\\\nonumber
&=\prod_{\alpha}q^{-k_{A^{\alpha}}w(\mathcal{K}_\alpha)}t^{-|A^\alpha|w(\mathcal{K}_\alpha)}\langle
\mathcal{K}_\alpha\star Q_{A^\alpha}\rangle\\\nonumber
&=\prod_\alpha W_{A^\alpha}(\mathcal{K}_\alpha;q,t).
\end{align}
\end{example}

\section{Decorated torus links}
Given $T\in \mathcal{C}_m$ which is the closure of a $m$-braid
$\beta$ such that $T$ has $L$ components and with the $\alpha$-th
component consists of $m_{\alpha}$ strings. Thus
$\sum_{\alpha=1}^{L}m_\alpha=m$. Let $Q_{A^{\alpha}}\in
\mathcal{C}_{d_\alpha}$ where $A^{\alpha}\vdash d_\alpha$. It is
clear that $T\star \otimes_{\alpha=1}^{L} Q_{A^\alpha} \in
\mathcal{C}_{n}$, where $n=\sum_{\alpha=1}^{L}m_{\alpha}d_{\alpha}$.
Since $\{Q_\mu\}_{\mu\vdash n}$ forms a linear basis of
$\mathcal{C}_n$ as showed in the last section. We obtain the
following expansion
\begin{align}
T\star \otimes_{\alpha=1}^{L} Q_{A^\alpha}=\sum_{\mu \vdash
n}c_{\lambda}^{\mu}Q_{\mu}
\end{align}

Let us consider the cable link diagram $T=T_{mL}^{nL}$ which is the
closure of the framed $mL$-braid $(\beta_{mL})^{nL}$,  where
$(m,n)=1$. The braid $\beta_{m}$ is showed in Figure 3.

\begin{figure}[b]
\begin{center}
\includegraphics[width=160 pt]{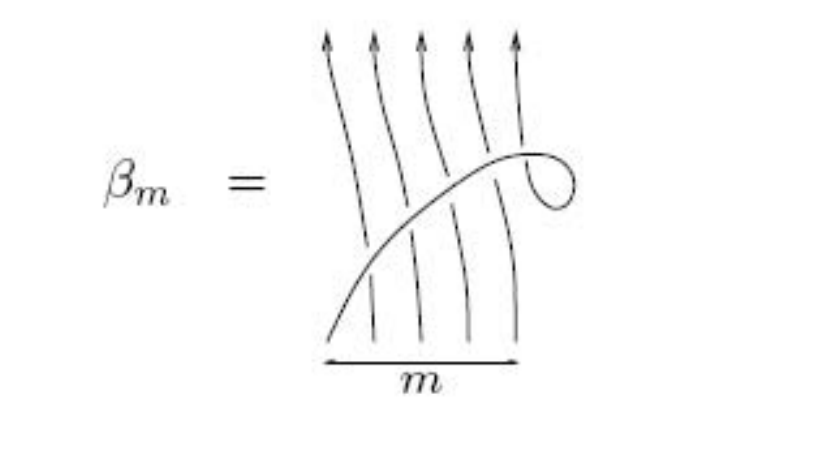} \caption{}
\end{center}
\end{figure}

 $T=T_{mL}^{nL}$ induces a map $F_{mL}^{nL}:
\otimes_{\alpha=1}^{L}\mathcal{C}_{d_\alpha}\rightarrow
\mathcal{C}_{m(\sum_{\alpha=1}^{L}d_{\alpha})}$ by taking an element
$\otimes_{\alpha=1}^L Q_{A^{\alpha}}$ to $T_{mL}^{nL}\star
\otimes_{\alpha=1}^{L} Q_{A^{\alpha}}$.

We define $\tau=F^{1}_1$, then $\tau$ is the framing change map. It
was showed in \cite{MM}  that
\begin{align}
\tau(Q_\mu)=\tau_\mu Q_\mu
\end{align}
where $\tau_\mu=q^{k_\mu}t^{|\mu|}$. The fractional twist map
$\tau^{\frac{n}{m}}:\mathcal{C}^+\rightarrow \mathcal{C}^+$ is the
linear map defined on the basis $Q_{\mu}$ by
\begin{align}
\tau^{\frac{n}{m}}(Q_{\mu})=(\tau_\mu)^{\frac{n}{m}}Q_\mu.
\end{align}
In order to give an expression for
$F^{nL}_{mL}(\otimes_{\alpha=1}^{L}Q_{A^{\alpha}})$, we need to
introduce the terminology of $plethysms$. Given a symmetric
polynomial in $N$ variables $p(x_1,..,x_N)=M_1+\cdots+M_r$ with $r$
monomials $M_i$. Let $q(x_1,..,x_r)$ be a symmetric function in $r$
variables. The plethysm $q[p]$ is the symmetric function of $N$
variables $q[p]=q(M_1,..,M_r)$. Since $\mathcal{C}^+$ is isomorphic
to the ring of symmetric functions. Let $Q\in \mathcal{C}^+$ and let
$P\in \mathcal{C}^+$ represent a sum of monomials each with
coefficient $1$. We use the notation $Q[P]$ to express the element
in $\mathcal{C}^+$ corresponding to the plethym of the functions
represented by $Q$ and $P$.

We have the following formula which is the link version of the
Theorem 13 as showed in \cite{MM}
\begin{align}
F_{mL}^{nL}(\otimes_{\alpha=1}^{L}Q_{A^{\alpha}})=\tau^{\frac{n}{m}}(\prod_{\alpha=1}^{L}Q_{A^{\alpha}}[P_m]).
\end{align}
With the definition of plethsm, we have
\begin{align}
\prod_{\alpha=1}^{L}Q_{A^{\alpha}}[P_m]=\sum_{\mu\vdash
m\sum_{\alpha=1}^Ld_\alpha}C_{A^1,..,A^{L}}^{\mu}Q_{\mu}
\end{align}
where $C_{A^1,..,A^{L}}^\mu$ are the coefficients given by
\begin{align}
s_{A^1}(x_1^m,x_2^m..,)\cdots
s_{A^L}(x_1^m,x_2^m,..)=\sum_{\mu\vdash
m\sum_{\alpha=1}^Ld_\alpha}C_{A^1,..,A^L}^{\mu}s_\mu(x_1,x_2,..).
\end{align}

By the definition of fractional twist map of $\tau^{\frac{n}{m}}$,
we obtain
\begin{align}
F_{mL}^{nL}(\otimes_{\alpha=1}^{L}Q_{A^{\alpha}})=\sum_{\mu\vdash
m\sum_{\alpha=1}^Ld_\alpha}C^\mu_{A^1,..,A^L}q^{\frac{n}{m}k_{\mu}}t^{n\sum_{\alpha}|A^\alpha|}Q_{\mu}.
\end{align}
Therefore, by definition (4.3), the colored HOMFLY polynomial of the
torus link $T_{mL}^{nL}$ is given by
\begin{align}
&W_{A^1,..,A^{L}}(T_{mL}^{nL};q,t)\\
\nonumber &=q^{-m\cdot n\sum_{\alpha=1}^L k_{A^{\alpha}}}t^{-n\cdot
m\sum_{\alpha=1}^L|A^\alpha|}\langle
F_{mL}^{nL}(\otimes_{\alpha=1}^{L}Q_{A^{\alpha}})\rangle \\\nonumber
&=q^{-m\cdot n\sum_{\alpha=1}^L
k_{A^{\alpha}}}t^{-n(m-1)\sum_{\alpha=1}^L|A^\alpha|}\sum_{\mu\vdash
m\sum_{\alpha=1}^L|A^\alpha|}C^\mu_{A^1,..,A^L}q^{\frac{n}{m}k_{\mu}}W_{\mu}(\bigcirc;q,t)
\end{align}
where $W_{\mu}(\bigcirc;q,t)=\langle
Q_\mu\rangle=s^*_{\mu}(q,t)=\sum_{\nu}\frac{\chi_\mu(C_\nu)}{z_\nu}\prod_{i=1}^{l(\nu)}\left(\frac{t^{\nu_i}-t^{-\nu_i}}{q^{\nu_i}-q{-\nu_i}}\right)$
 is the colored HOMFLY polynomial of unknot $\bigcirc$. The coefficients $C_{A^1,..,A^L}^{\mu}$ can be calculated as
follows. According to the Frobenuius formula (3.12), we have
\begin{align}
\prod_{\alpha=1}^L
s_{A^\alpha}(x_1^m,x_2^m,..)&=\sum_{B^1,..,B^L}\prod_{\alpha=1}^L\frac{\chi_{A_\alpha}(C_{B^\alpha})}{z_{B^\alpha}}\prod_{\alpha=1}^L
P_{mB^\alpha}(x_1,x_2..)\\\nonumber
&=\sum_{B^1,..,B^L}\prod_{\alpha=1}^L\frac{\chi_{A_\alpha}(C_{B^\alpha})}{z_{B^\alpha}}
P_{m\sum_{i=1}^L B^i}(x_1,x_2..)\\\nonumber
&=\sum_{B^1,..,B^L}\prod_{\alpha=1}^L\frac{\chi_{A_\alpha}(C_{B^\alpha})}{z_{B^\alpha}}
\sum_{\mu\vdash
m\sum_{\alpha}|B^{\alpha}|}\chi_\mu(C_{m\sum_{i=1}^LB^i})s_{\mu}(x_1,x_2,..).
\end{align}
It follows that
\begin{align}
C_{A^1,..,A^L}^{\mu}=\sum_{B^1,..,B^L}\prod_{\alpha=1}^{L}\frac{\chi_{A^\alpha}(C_{B^\alpha})}{z_{B^\alpha}}\chi_{\mu}(C_{m\sum_{i=1}^LB^i}).
\end{align}
\begin{example}
Substituting  $L=1$, and the partition $A=(1)$ in formula (5.7), we
obtain the following relation in HOMFLY skein $\mathcal{C}_m$:
\begin{align}
T_m^n =T_{m}^n\star Q_{(1)}
=\sum_{\mu}\chi_{\mu}(C_{(m)})q^{\frac{n}{m}k_\mu} t^n  Q_\mu
\end{align}
So one has
\begin{align}
W_{(1)}(T^n_m;q,t)=t^{-n(m-1)}\sum_{\mu}\chi_{\mu}(C_{(m)})q^{\frac{n}{m}k_\mu}W_{\mu}(\bigcirc;q,t)
\end{align}
which is the formula (6.71) showed in \cite{DSV}.
\end{example}
\begin{example}
Torus knot $T(2,2k+1)$,
\begin{align}
W_{(1)}(q,t)&=t^{-(2k+1)}(q^{2k+1}s^*_{(2)}(q,t)-q^{-(2k+1)}s^*_{(11)}(q,t)),\\
W_{(2)}(q,t)&=t^{-4k-2}(q^{4k+2}s^*_{(4)}(q,t)-q^{-4k-2}s^*_{(31)}(q,t)+q^{-8k-4}s^*_{(22)}(q,t)),\\
W_{(11)}(q,t)&=t^{-4k-2}(q^{4k+2}s^*_{(22)}(q,t)-q^{4k+2}s^*_{(211)}(q,t)+q^{-4k-2}s^*_{(1111)}(q,t)).
\end{align}
\end{example}
\begin{example}
Torus link $T(2,2k)$,
\begin{align}
W_{(1),(1)}(q,t)&=q^{2k}s^*_{(2)}(q,t)+q^{-2k}s^*_{(11)}(q,t),\\
W_{(2),(1)}(q,t)&=q^{4k}s^*_{(3)}(q,t)+q^{-2k}s^*_{(21)}(q,t).
\end{align}
\end{example}

\section{Special polynomials}
Given a knot $\mathcal{K}$ and a partition $A\in \mathcal{P}$,  P.
Dunin-Barkowski, A. Mironov, A. Morozov, A. Sleptsov and A. Smirnov
\cite{BMMSS} defined the following special polynomial
\begin{align}
H_{A}^\mathcal{K}(t)=\lim_{q\rightarrow
1}\frac{W_{A}(\mathcal{K};q,t)}{W_{A}(\bigcirc;q,t)}
\end{align}
and its dual
\begin{align}
\Delta_{A}^\mathcal{K}(q)=\lim_{t\rightarrow
1}\frac{W_{A}(\mathcal{K};q,t)}{W_{A}(\bigcirc;q,t)}.
\end{align}
In particular, when $A=(1)$, we have
\begin{align}
H_{(1)}^\mathcal{K}(t)=\lim_{q\rightarrow
1}\frac{W_{(1)}(\mathcal{K};q,t)}{W_{(1)}(\bigcirc;q,t)}=P_{\mathcal{K}}(1,t)
\end{align}
where $P_{\mathcal{K}}(q,t)$ is the HOMFLY polynomial as defined in
(2.3). And
\begin{align}
\Delta_{(1)}^{\mathcal{K}}(q)=\lim_{t\rightarrow
1}\frac{W_{(1)}(\mathcal{K};q,t)}{W_{(1)}(\bigcirc;q,t)}=\Delta_{\mathcal{K}}(q)
\end{align}
where $\Delta_{\mathcal{K}}(q)$ is the Alexander polynomial of the
knot $\mathcal{K}$.

After testing many examples \cite{BMMSS,IMMM1,IMMM2}, they proposed
the following conjecture.
\begin{conjecture}
Given a knot $\mathcal{K}$ and partition $A\in \mathcal{P}$, we have
the following identities:
\begin{align}
H_{A}^{\mathcal{K}}(t)=H_{(1)}^{\mathcal{K}}(t)^{|A|}\\
\Delta_{A}^{\mathcal{K}}(q)=\Delta_{(1)}^{\mathcal{K}}(q^{|A|}).
\end{align}
\end{conjecture}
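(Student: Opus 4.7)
My plan is to establish identity (6.5) of Conjecture~6.1 in full generality by combining the Frobenius formula with the Lickorish--Millett expansion, and to handle (6.6) honestly: the introduction already supplies the counterexample $A=(22)$, $\mathcal{K}=T(2,3)$ displayed in (1.8), so (6.6) is \emph{false} as worded; the only positive result one can propose in that direction is the restricted Theorem~1.2.

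For (6.5), first expand
\begin{equation*}
\langle\mathcal{K}\star Q_A\rangle=\sum_{\mu\vdash d}\frac{\chi_A(C_\mu)}{z_\mu}\,\langle\mathcal{K}\star P_\mu\rangle,\qquad d=|A|,
\end{equation*}
using the Frobenius formula (3.19). Because multiplication in $\mathcal{C}^{+}$ is geometric nesting and each $P_{\mu_i}=X_{\mu_i}/\{\mu_i\}$ is a one-component element, the decorated link $\mathcal{K}\star P_\mu$ has exactly $l(\mu)$ components, each a cable of $\mathcal{K}$. By (2.8) the leading term of $\langle\mathcal{K}\star P_\mu\rangle$ in $q-q^{-1}$ is of order $-l(\mu)$; since $\{\mu_i\}\to\mu_i$ as $q\to 1$ contributes no additional pole, the unique dominant partition in the limit is $\mu=(1^d)$, for which the decoration is the blackboard $d$-cable of $\mathcal{K}$ and each component is a parallel copy of $\mathcal{K}$ with $p_0(t)=P_{\mathcal{K}}(1,t)$ and self-writhe $w(\mathcal{K})$. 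Using $\chi_A(C_{(1^d)})/z_{(1^d)}=\chi_A(C_{(1^d)})/d!$ this gives
\begin{equation*}
\langle\mathcal{K}\star Q_A\rangle\sim\frac{\chi_A(C_{(1^d)})}{d!}\,t^{dw(\mathcal{K})}(t-t^{-1})^{d}\,P_{\mathcal{K}}(1,t)^{d}\,(q-q^{-1})^{-d}.
\end{equation*}
The framing factor $t^{-|A|w(\mathcal{K})}=t^{-dw(\mathcal{K})}$ cancels $t^{dw(\mathcal{K})}$ and $q^{-k_Aw(\mathcal{K})}\to 1$; the same computation on $\bigcirc$ produces the same prefactor with $P_{\mathcal{K}}(1,t)^d$ replaced by $1$. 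Dividing yields $H_A^{\mathcal{K}}(t)=P_{\mathcal{K}}(1,t)^{|A|}=H_{(1)}^{\mathcal{K}}(t)^{|A|}$, proving (6.5).

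The hard step above is verifying that contributions from every $\mu\ne(1^d)$ are strictly subleading in $q-q^{-1}$; this reduces to a careful application of Theorem~2.2 to each cabled link $\mathcal{K}\star\prod_i X_{\mu_i}$, together with the inequality $l(\mu)<d$ and the regularity of $1/\{\mu_i\}$ at $q=1$. For (6.6) no analogous argument succeeds: in the $t\to 1$ limit the Lickorish--Millett pole in $q-q^{-1}$ does not isolate a single dominant partition, and the contributions from various $\mu$ involve Alexander polynomials of genuine cables $\mathcal{K}^{(m)}$ rather than of $\mathcal{K}$ itself; these do not combine into $\Delta_{(1)}^{\mathcal{K}}(q^{|A|})$. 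This is simultaneously the main obstacle and the reason the counterexample exists: no universal proof of (6.6) can be given, and any positive refinement (such as Theorem~1.2) must proceed by a direct analysis of the explicit torus-knot formula (5.8) rather than by this skein-asymptotic strategy.
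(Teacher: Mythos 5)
Your argument for (6.5) is correct and is essentially the paper's own second proof of Theorem~6.2: the same Frobenius expansion of $Q_A$ into power sums $P_\mu$, the same observation that $\mathcal{K}\star P_\mu$ has $l(\mu)$ components so the Lickorish--Millett pole order $(q-q^{-1})^{-l(\mu)}$ isolates $\mu=(1^d)$ (with $\chi_A(C_{(1^d)})\neq 0$), and the same evaluation of the leading coefficient via (2.8) with $\bar{w}(\mathcal{K}\star P_{(1^d)})=d\,w(\mathcal{K})$. Your treatment of (6.6) likewise coincides with the paper's: it is refuted by the $A=(22)$, $T(2,3)$ counterexample and survives only in the restricted hook-partition form of Theorem~6.4, proved there by direct analysis of the torus-knot formula.
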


In the followings, we give a proof the formula (6.5). In fact, one
can generalize the definition of the special polynomial (6.1) for
any link $\mathcal{L}$ with $L$ components
\begin{align}
H_{\vec{A}}^\mathcal{L}(t)=\lim_{q\rightarrow
1}\frac{W_{\vec{A}}(\mathcal{L};q,t)}{W_{\vec{A}}(\bigcirc^{\otimes
L};q,t)}.
\end{align}
In fact, the formula (6.5) is the special case of the following
theorem.
\begin{theorem}
Given $\vec{A}=(A^1,..,A^{L})\in \mathcal{P}^L$ and a link
$\mathcal{L}$ with $L$ components $\mathcal{K}_\alpha,
\alpha=1,..,L$, then we have
\begin{align}
H_{\vec{A}}^{\mathcal{L}}(t)=\prod_{\alpha=1}^{L}H_{(1)}^{\mathcal{K}_\alpha}(t)^{|A^\alpha|}.
\end{align}
\end{theorem}
We will give two proofs of Theorem 6.2. The first proof:
\begin{proof}
Choosing a crossing $c$ of the link $\mathcal{L}$, suppose $c$ is a
positive crossing, by the skein relation,
\begin{align}
\mathcal{L}(c)=\mathcal{L}(c^{-1})+(q-q^{-1})\mathcal{L}(\upuparrows)
\end{align}
It is clear that $\mathcal{L}(c)$ and $\mathcal{L}(c^{-1})$ have the
same number of link components.

Considering the degree of $q-q^{-1}$ in the expansion form (2.5) for
the HOMFLY polynomial, if the number of the link components for
$\mathcal{L}(\upuparrows)$ is not big than $\mathcal{L}(c)$, then
taking the limit $q\rightarrow 1$ after the removal of singularity,
we have
\begin{align}
\mathcal{L}(c)=\mathcal{L}(c^{-1}).
\end{align}

Therefore, with the above rule, one can exchange the crossings
between different components of link $\mathcal{L}$, such that
$\mathcal{L}=\otimes_{\alpha=1}^{L} \mathcal{K}_\alpha$. In this
case, as showed in the formula (4.6),
\begin{align}
W_{\vec{A}}(\mathcal{L};q,t)=\prod_{\alpha=1}^LW_{A^{\alpha}}(\mathcal{K}_\alpha;q,t).
\end{align}
Similarly, $ W_{\vec{A}}(\bigcirc^{\otimes
L};q,t)=\prod_{\alpha=1}^{L}W_{A^{\alpha}}(\bigcirc;q,t)$, we obtain
\begin{align}
\lim_{q\rightarrow
1}\frac{W_{\vec{A}}(\mathcal{L};q,t)}{W_{\vec{A}}(\bigcirc^{\otimes
L};q,t)}=\prod_{\alpha=1}^{L}\lim_{q\rightarrow
1}\frac{W_{A^\alpha}(\mathcal{K}_\alpha;q,t)}{W_{A^\alpha}(\bigcirc;q,t)}.
\end{align}
Thus, we only need to consider the case of each knot
$\mathcal{K}_\alpha$.

One can exchange the crossings which lie in $\mathcal{K}_\alpha\star
Q_{A^\alpha}$ but not in $Q_{A^\alpha}$ by the skein relation (6.9).
These crossings satisfy the relation (6.10) in the limit
$q\rightarrow 1$. Hence
\begin{align}
\mathcal{K}_\alpha\star
Q_{A^\alpha}=\mathcal{K}_{\alpha}\#\mathcal{K}_\alpha\# \cdots \#
\mathcal{K}_\alpha \# Q_{A^\alpha}
\end{align}
where the righthand side of (6.13) represents the connected sum of
$Q_{A^\alpha}$ and $|A^\alpha|$ knots $\mathcal{K}_\alpha$. By the
formula (2.10), we have
\begin{align}
\frac{\langle{\mathcal{K}_\alpha \star Q_{A^\alpha}}\rangle}{\langle
\bigcirc\rangle}=\frac{\langle
Q_{A^\alpha}\rangle}{\langle\bigcirc\rangle}\left(\frac{\langle\mathcal{K}_\alpha\rangle}{\langle\bigcirc\rangle}\right)^{
|A^\alpha|}.
\end{align}
Therefore,  we obtain
\begin{align}
\lim_{q\rightarrow
1}\frac{W_{A^\alpha}(\mathcal{K}_\alpha;q,t)}{W_{A^\alpha}(\bigcirc;q,t)}&=\lim_{q\rightarrow
1}\frac{t^{-|A^\alpha|w(\mathcal{K}_\alpha)}\langle
\mathcal{K}_\alpha \star Q_{A^\alpha}\rangle}{\langle
Q_{A^\alpha}\rangle}\\\nonumber & =\lim_{q\rightarrow
1}\left(\frac{t^{-w(\mathcal{K}_\alpha)}\langle
\mathcal{K}_\alpha\rangle}{\langle\bigcirc\rangle}\right)^{|A^\alpha|}\\\nonumber
&=P_{\mathcal{K}_\alpha}(1,t)^{|A^\alpha|}
\end{align}
which is just the formula (6.7).
\end{proof}

The second proof.
\begin{proof}
We only give the proof for the case of a knot $\mathcal{K}$. It is
easy to generalize the proof for any link $\mathcal{L}$. Given a
partition $A$ with $|A|=d$, by definition
\begin{align}
W_A(\mathcal{K};q,t)&=q^{-k_Aw(\mathcal{K})}t^{-dw(\mathcal{K})}\langle\mathcal{K}\star
Q_A\rangle\\\nonumber
&=q^{-k_Aw(\mathcal{K})}t^{-dw(\mathcal{K})}\sum_{B}\frac{\chi_{A}(B)}{z_B}\langle\mathcal{K}\star
P_B\rangle\\\nonumber
&=q^{-k_Aw(\mathcal{K})}t^{-dw(\mathcal{K})}\left(\frac{\chi_{A}(C_{(1^d)})}{z_{(1^d)}}\langle\mathcal{K}\star
P_{(1^d)}\rangle+\sum_{l(B)\leq
d-1}\frac{\chi_{A}(C_B)}{z_{B}}\langle\mathcal{K}\star
P_B\rangle\right)
\end{align}
and
\begin{align}
s_A^{*}(q,t)&=\sum_{B}\frac{\chi_{A}(B)}{z_B}\prod_{j=1}^{l(B)}\frac{t^{B_j}-t^{-B_j}}{q^{B_j}-q^{-B_j}}\\\nonumber
&=\left(\frac{\chi_{A}(C_{(1^d)})}{z_{(1^d)}}\left(\frac{t-t^{-1}}{q-q^{-1}}\right)^{d}+\sum_{l(B)\leq
d-1}\frac{\chi_{A}(C_B)}{z_{B}}\prod_{j=1}^{l(B)}\frac{t^{B_j}-t^{-B_j}}{q^{B_j}-q^{-B_j}}\right)
\end{align}

By the formula (3.21), it is clear that
\begin{align}
\mathcal{K}\star P_{(1^d)}=\mathcal{K}\star P_{(1)}\cdots P_{(1)}
\end{align}
and the number of the link components is $L(\mathcal{K}\star
P_{(1)}\cdots P_{(1)})=d$. For $|B|=d$, with $l(B)\leq d-1$, the
number of the link components $L(\mathcal{K}\star P_B)$ is less than
$d-1$.

According to the expansion formula (2.7), we have
\begin{align}
\langle\mathcal{K}\star P_{(1^d)}\rangle=\sum_{g\geq
0}\hat{p}_{2g+1-d}^{\mathcal{K}\star P_{(1^d)}}(t)(q-q^{-1})^{2g-d}
\end{align}
and for $l(B)\leq d-1$,
\begin{align}
\langle\mathcal{K}\star P_{B}\rangle=\sum_{g\geq
0}\hat{p}_{2g+1-L(\mathcal{K}\star P_{B})}^{\mathcal{K}\star
P_{B}}(t)(q-q^{-1})^{2g-L(\mathcal{K}\star P_{B})}
\end{align}
with link components $L(\mathcal{K}\star P_{B})\leq d-1$.

Since $\chi_{A}(C_{(1^d)})\neq 0$, by a direct calculation, we
obtain
\begin{align}
\lim_{q\rightarrow
1}\frac{W_A(\mathcal{K};q,t)}{s_A^{*}(q,t)}=\frac{t^{-dw(\mathcal{K})}\hat{p}_0^{\mathcal{K}\star
P_{(1^d)}}(t)}{(t-t^{-1})^d}
\end{align}

According to the formula (2.8)
\begin{align}
\hat{p}_0^{\mathcal{K}\star
P_{(1^d)}}(t)&=t^{\bar{w}(\mathcal{K}\star
P_{(1^d)})}(t-t^{-1})^{d}(p_0^{\mathcal{K}}(t))^{d}.
\end{align}
Moreover, it is clear that $\bar{w}(\mathcal{K}\star
P_{(1^d)})=dw(\mathcal{K})$, thus
\begin{align}
\lim_{q\rightarrow
1}\frac{W_A(\mathcal{K};q,t)}{s_A^{*}(q,t)}=p_0^{\mathcal{K}}(t)^d=P_\mathcal{K}(1,t)^d.
\end{align}
\end{proof}

\begin{example}
For the torus knot $T(2,2k+1)$. Its HOMFLY polynomial is
\begin{align}
P_{T(2,2k+1)}(q,t)=\left(\frac{q^{2k+2}-q^{-2k-2}}{q^2-q^{-2}}t^{-2k}-\frac{q^{2k}-q^{-2k}}{q^2-q^{-2}}t^{-2k-2}\right).
\end{align}
Hence
\begin{align}
P_{T(2,2k+1)}(1,t)=(k+1)t^{-2k}-kt^{-2k-2}.
\end{align}
By formulas (5.13), (5.14) and (5.15), we obtain
\begin{align}
\lim_{q\rightarrow
1}\frac{W_{(1)}(q,t)}{s^*_{(1)}(q,t)}&=\frac{1}{2}t^{-2-2k}(1+t^2+(2k+1)(t^2-1))=P_{T(2,2k+1)}(1,t)\\
\lim_{q\rightarrow
1}\frac{W_{(2)}(q,t)}{s^*_{(2)}(q,t)}&=\frac{1}{4}t^{-4-4k}(1+t^2+(2k+1)(t^2-1))^2=P_{T(2,2k+1)}(1,t)^2\\
\lim_{q\rightarrow
1}\frac{W_{(11)}(q,t)}{s^*_{(11)}(q,t)}&=\frac{1}{4}t^{-4-4k}(1+t^2+(2k+1)(t^2-1))^2=P_{T(2,2k+1)}(1,t)^2
\end{align}
\end{example}

As to the formula (6.6) in Conjecture 6.1, we found that this
formula does not hold for arbitrary partition $A$. Given the torus
knot $T(2,3)$, its Alexander polynomial is
\begin{align}
\Delta^{T(2,3)}_{(1)}(q)=\Delta_2-1.
\end{align}
where we have used the notation $\Delta_d=q^d-q^{-d}$. Considering
the partition $A=(22)$, we have
\begin{align}
\Delta^{T(2,3)}_{(22)}(q)&=\lim_{t\rightarrow
1}\frac{W_{(22)}(q,t)}{s^*_{(22)}(q,t)}\\\nonumber &=
8-7\Delta_4-\Delta_6+6\Delta_8+2\Delta_{10}-5\Delta_{12}\\\nonumber
&
-2\Delta_{14}+3\Delta_{16}+\Delta_{18}-\Delta_{20}-\Delta_{22}\\\nonumber
&\neq \Delta^{T(2,3)}_{(1)}(q^4).
\end{align}
However, we believe that the formula (6.6) holds for any knots when
$A$ is a hook partition. In fact, we have proved the following
theorem.
\begin{theorem}
Given a torus knot $T(m,n)$, where $m$ and $n$ are relatively prime.
If $A$ is a hook partition, then we have
\begin{align}
\Delta_{A}^{T(m,n)}(q)=\Delta_{(1)}^{T(m,n)}(q^{|A|}).
\end{align}
\end{theorem}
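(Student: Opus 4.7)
The plan is to take the $t \to 1$ limit of the Rosso--Jones-type formula $(5.8)$ for $W_A(T(m,n);q,t)$ and reduce the theorem to a plethysm identity for hook Schur functions.

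I substitute $L=1$ in $(5.8)$ and expand $W_\mu(\bigcirc;q,t)=s_\mu^*(q,t)$ via the Frobenius formula. Using $\frac{t^k-t^{-k}}{q^k-q^{-k}} = \frac{k(t-t^{-1})}{q^k-q^{-k}}+O((t-t^{-1})^2)$, each power-sum factor $\prod_j \frac{t^{B_j}-t^{-B_j}}{q^{B_j}-q^{-B_j}}$ contributes $(t-t^{-1})^{l(B)}$, so $s_\mu^*(q,t)$ vanishes to order at least $(t-t^{-1})^{l_{\min}(\mu)}$ with $l_{\min}(\mu)=\min\{l(B):\chi_\mu(C_B)\neq 0\}$. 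Since $\chi_\mu(C_{(|\mu|)})\neq 0$ iff $\mu$ is a hook, only hook $\mu$'s contribute at first order, with $s_\mu^*(q,t)=\frac{\chi_\mu(C_{(|\mu|)})(t-t^{-1})}{q^{|\mu|}-q^{-|\mu|}}+O((t-t^{-1})^2)$. As $A$ is itself a hook, $s_A^*$ also vanishes to first order, and cancelling $(t-t^{-1})$ from the ratio yields
\[
\Delta_A^{T(m,n)}(q) = \frac{q^{-mnk_A}(q^d-q^{-d})}{\chi_A(C_{(d)})(q^{md}-q^{-md})}\sum_{\substack{\mu\text{ hook}\\ \mu\vdash md}}C_A^\mu\,q^{\frac{n}{m}k_\mu}\chi_\mu(C_{(md)}),
\]
where $d=|A|$.

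Specialising to $A=(1)$ gives $C_{(1)}^\mu=\chi_\mu(C_{(m)})$, $\chi_\mu(C_{(m)})^2=1$ and $k_\mu=m(p-\ell)$ for the hook $\mu=(p+1,1^\ell)\vdash m$, so the sum collapses to the geometric series $\sum_\ell q^{n(m-1-2\ell)}=\frac{q^{mn}-q^{-mn}}{q^n-q^{-n}}$, recovering the classical formula $\Delta_{(1)}^{T(m,n)}(q)=\frac{(q-q^{-1})(q^{mn}-q^{-mn})}{(q^m-q^{-m})(q^n-q^{-n})}$. Writing $A=(s+1,1^r)$ so that $d=s+r+1$, $\chi_A(C_{(d)})=(-1)^r$ and $k_A=d(s-r)$, and using $\chi_\mu(C_{(md)})=(-1)^\ell$, $k_\mu=md(md-2\ell-1)$ for $\mu=(md-\ell,1^\ell)$, after matching exponents $nd(md-2\ell-1)=nd(m(s-r)+2k-m+1)$ the theorem is equivalent to the plethysm identity
\[
C_A^{(md-\ell,\,1^\ell)} \;=\; \begin{cases} (-1)^{r+\ell} & \text{if } mr\leq \ell\leq mr+m-1,\\ 0 & \text{otherwise.}\end{cases}
\]

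To prove this identity, I would apply the plethysm $[p_m]$ to the hook generating function $\sum_{a,b\geq 0} s_{(a+1,1^b)}u^a v^b=\frac{E(v)H(u)-1}{u+v}$ (which follows from $s_{(a+1,1^b)}=\sum_{k=0}^{b}(-1)^k h_{a+1+k}e_{b-k}$), obtaining $\frac{\prod_i(1+x_i^m v)(1-x_i^m u)^{-1}-1}{u+v}$ as the generating function of the $s_{(a+1,1^b)}[p_m]$, and then extract the coefficient of each hook Schur function $s_{(md-\ell,1^\ell)}$. Alternatively, Murnaghan--Nakayama applied to hook characters suffices, since the admissible rim hooks inside a hook shape are only horizontal top-row strips, vertical first-column strips, and the maximal $L$-shaped strip --- a rigidity that forces both the vanishing outside the stated range of $\ell$ and the sign pattern. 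I have verified the identity in small cases, e.g.\ $p_m=s_{(1)}[p_m]=\sum_{\ell=0}^{m-1}(-1)^\ell s_{(m-\ell,1^\ell)}$ confirms it for $A=(1)$, and explicit expansions of $s_{(2)}[p_2]$, $s_{(1,1)}[p_2]$ and $e_3[p_2]$ match the predicted hook coefficients. The main obstacle is precisely this plethysm identity: it is the rigid combinatorial fact that for hook $A$ the plethysm $s_A[p_m]$ meets the hook subspace in exactly $m$ terms with alternating signs, and is also where the hook hypothesis is essential, since the counterexample $(1.8)$ shows no analogous statement holds for general $A$.
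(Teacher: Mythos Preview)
Your reduction to the first-order term in $(t-t^{-1})$ is correct and matches the paper's use of L'H\^opital: only hook $\mu$'s survive in the numerator, only $B=(d)$ survives in the denominator, and one arrives at exactly your displayed expression for $\Delta_A^{T(m,n)}(q)$. The plethysm identity you isolate,
\[
C_{(s|r)}^{(md-\ell,\,1^\ell)}=\begin{cases}(-1)^{r+\ell}&mr\le\ell\le mr+m-1,\\0&\text{otherwise},\end{cases}
\]
is true, and once granted your computation of the geometric series and the final closed form is correct.

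The difference from the paper is organizational rather than conceptual. The paper never extracts the individual $C_A^\mu$; instead it keeps the double sum $\sum_B\frac{\chi_A(C_B)}{z_B}\sum_\mu\chi_\mu(C_{mB})\chi_\mu(C_{(md)})q^{\frac{n}{m}k_\mu}$ and collapses it in two steps by the single identity (Lemma~6.5)
\[
\sum_{a+b+1=|B|}\chi_{(a|b)}(C_B)(-1)^b u^{a-b}=\frac{\prod_j(u^{B_j}-u^{-B_j})}{u-u^{-1}},
\]
first with $u=q^{nd}$ to do the $\mu$-sum, then with $u=q^{mnd}$ together with character orthogonality to do the $B$-sum. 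This is exactly the specialization $v=-u^{-1}$ of the hook generating function $\sum_{a,b}s_{(a+1,1^b)}u^av^b=\frac{E(v)H(u)-1}{u+v}$ that you invoke, so your proposed tool and the paper's lemma are the same identity. Indeed, applying Lemma~6.5 to $\sum_{\ell}C_A^{(md-\ell,1^\ell)}(-1)^\ell u^{md-1-2\ell}$ and then orthogonality yields $(-1)^r\frac{u^m-u^{-m}}{u-u^{-1}}u^{m(s-r)}$, whose Laurent expansion gives your coefficient formula immediately; this is the clean way to close the gap you flag. The paper's route is a bit shorter because it never unpacks the coefficients, while your route has the advantage of exhibiting the explicit hook decomposition of $s_A[p_m]$, which is a pleasant combinatorial by-product. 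Your Murnaghan--Nakayama sketch would also work but is more laborious than the generating-function argument.
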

Every hook partition can be presented as the form $(a+1,1,...,1)$
with $b+1$ length for some $a,b\in \mathbb{Z}_{\geq 0}$, denoted by
$(a|b)$.

Before to prove this theorem, we need to introduce the following
lemma first.
\begin{lemma}
Given a partition $B$, we have the following identity,
\begin{align}
\sum_{a+b+1=|B|}\chi_{(a|b)}(C_B)(-1)^bu^{a-b}=\frac{\prod_{j=1}^{l(B)}(u^{B_j}-u^{-B_j})}{u-u^{-1}}.
\end{align}
\end{lemma}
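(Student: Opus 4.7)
The plan is to establish the identity by equating coefficients of power sums in a single symmetric-function identity, then reading off the character sum via linear independence. The key object is the bivariate generating function
\begin{equation*}
G(u,v) := \sum_{a,b \geq 0} s_{(a|b)}(x)\, u^{a} v^{b}.
\end{equation*}
First I would apply a one-row cofactor expansion to the Jacobi--Trudi determinant of Section 3 for the hook $(a|b) = (a+1, 1^{b})$, yielding the classical formula $s_{(a|b)} = \sum_{k=0}^{b} (-1)^{k} h_{a+1+k}\, e_{b-k}$. Substituting this into $G(u,v)$ and summing the resulting finite geometric progression in $u/v$ collapses the double sum to the closed form
\begin{equation*}
G(u,v) \;=\; \frac{E(v)\bigl(H(u) - H(-v)\bigr)}{u+v},
\end{equation*}
with $H$ and $E$ the generating functions defined in (3.7) and (3.8).

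Next I would specialize $v = -u^{-1}$, so that $u^{a} v^{b} = (-1)^{b} u^{a-b}$. Using $H(w)E(-w)=1$ at $w = u^{-1}$ in the form $E(-u^{-1}) = 1/H(u^{-1})$, the closed form simplifies to
\begin{equation*}
\sum_{a,b \geq 0} (-1)^{b} u^{a-b}\, s_{(a|b)}(x) \;=\; \frac{H(u)/H(u^{-1}) - 1}{u - u^{-1}}.
\end{equation*}
Since $H(u) = \exp\bigl(\sum_{k \geq 1} p_{k} u^{k}/k\bigr)$, the ratio equals $\exp\bigl(\sum_{k \geq 1} p_{k}(u^{k}-u^{-k})/k\bigr)$; expanding this exponential by the standard identity $\exp\bigl(\sum_{k} \alpha_{k}/k\bigr) = \sum_{B} \prod_{j} \alpha_{B_{j}}/z_{B}$ gives
\begin{equation*}
\frac{H(u)}{H(u^{-1})} \;=\; \sum_{B} \frac{p_{B}(x)\, \prod_{j=1}^{l(B)}(u^{B_{j}} - u^{-B_{j}})}{z_{B}},
\end{equation*}
the empty partition contributing the constant $1$.

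Finally, subtracting this constant, dividing by $u-u^{-1}$, and expanding the left-hand side of the previous displayed equation in the power-sum basis via the Frobenius formula (3.12), both sides appear as $\sum_{B \ne \emptyset} p_{B}/z_{B}$ multiplied by an explicit rational function of $u$. Linear independence of $\{p_{B}\}$ in the ring of symmetric functions then permits matching the coefficient of $p_{B}/z_{B}$ for each nonempty $B$, producing exactly the identity of the lemma. The main obstacle is the algebraic bookkeeping of the first step, namely extracting the closed form of $G(u,v)$ from Jacobi--Trudi, but this reduces to summing an elementary geometric progression; every other ingredient is a standard symmetric-function identity.
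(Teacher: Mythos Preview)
Your proof is correct and follows essentially the same route as the paper's. The paper quotes the identity
\[
E(-u^{-1})H(u)=1+(u-u^{-1})\sum_{a,b\ge 0}(-1)^{b}u^{a-b}s_{(a|b)}(x)
\]
directly from Macdonald (Problem~14, p.~49), whereas you derive it yourself by summing the two-variable generating function $G(u,v)$ via Jacobi--Trudi and then specializing $v=-u^{-1}$; from that point on---rewriting $E(-u^{-1})H(u)=H(u)/H(u^{-1})$, expanding the exponential in power sums, and comparing coefficients of $p_{B}/z_{B}$---the two arguments are identical. Your version is a bit more self-contained; the paper's is shorter by outsourcing that first step.
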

\begin{proof}
According to problem 14 at page 49 of \cite{Mac}, taking $t=u$, we
have
\begin{align}
\prod_{i}\frac{1-u^{-1}x_i}{1-ux_i}=E(-u^{-1})H(u)=1+(u-u^{-1})s_{(a|b)}(x)(-1)^bu^{a-b}
\end{align}
since
$s_{(a|b)}(x)=\sum_{\lambda}\frac{\chi_{(a|b)}(C_\lambda)}{z_\lambda}p_\lambda(x)$,
and
\begin{align}
E(-u^{-1})H(u)&=\frac{H(u)}{H(u^{-1})}\\\nonumber
&=\exp\left(\sum_{r\geq
1}\frac{p_r(x)}{r}(u^r-u^{-r})\right)\\\nonumber &=\prod_{r\geq
1}\exp\left(\frac{p_r(x)}{r}(u^r-u^{-r})\right)\\\nonumber
&=\prod_{r\geq 1}\sum_{m_r\geq
0}\frac{p_r(x)^{m_r}(u^r-u^{-r})^{m_r}}{r^{m_r}m_r!}\\\nonumber
&=\sum_{\lambda}\frac{p_{\lambda}(x)}{z_\lambda}\prod_{j=1}^{l(\lambda)}(u^{\lambda_j}-u^{-\lambda_j})
\end{align}
Comparing the coefficients of $p_B(x)$ in (6.34), the formula (6.32)
is obtained.
\end{proof}
In the following, we assume $A$ and $B$ are two partitions of $d$
and $\mu$ is a partition of $md$. By the property of character
theory of symmetric group, one has

\makeatletter
\let\@@@alph\@alph
\def\@alph#1{\ifcase#1\or \or $'$\or $''$\fi}\makeatother
\begin{subnumcases}
{\chi_A(C_{(d)})=} (-1)^b, &\text{if $A$ is a hook partition
$(a|b)$}\\\nonumber 0, &\text{otherwise}
\end{subnumcases}
\makeatletter\let\@alph\@@@alph\makeatother

The following formula is a consequence of Lemma 6.5,
\begin{align}
\sum_{\mu}\chi_{\mu}(C_{mB})\chi_{\mu}(C_{(md)})q^{\frac{n}{m}k_\mu}&=\sum_{a'+b'+1=md}\chi_{(a'|b')}(C_{mB})(-1)^{b'}q^{\frac{n}{m}(a'-b')md}\\\nonumber
&=\frac{\prod_{j=1}^{l(B)}(q^{mndB_j}-q^{-mndB_j})}{q^{nd}-q^{-nd}}.
\end{align}

Given a hook partition $A=(a|b)$, according to Lemma 6.5, we also
have
\begin{align}
&\sum_{B}\frac{\chi_{(a|b)}(C_B)}{z_B}\prod_{j=1}^{l(B)}(q^{mndB_i}-q^{mndB_i})\\\nonumber
&=\sum_{B}\frac{\chi_{(a|b)}(C_B)}{z_B}(q^{mnd}-q^{-mnd})
\sum_{\hat{a}+\hat{b}+1=d}\chi_{(\hat{a}|\hat{b})}(C_B)(-1)^{\hat{b}}(q^{mnd})^{(\hat{a}-\hat{b})}\\\nonumber
&=(q^{mnd}-q^{-mnd})(-1)^bq^{mnd(a-b)}.
\end{align}
where we have used the orthogonal relation
\begin{align}
\sum_{B}\frac{\chi_{A^1}(C_B)\chi_{A^2}(C_B)}{z_B}=\delta_{A^1A^2}.
\end{align}
Now we can give a proof of Theorem 6.4.
\begin{proof}
Since $A=(a|b)$ is a hook partition of $d$, i.e $a+b+1=d$. So
$k_{(a|b)}=(a-b)d$. By the formula (4.4), so we get
\begin{align}
s_{(a|b)}^{*}(q,t)=\frac{(-1)^b}{d}\left(\frac{t^d-t^{-d}}{q^d-q^{-d}}\right)+\sum_{\substack{B\vdash
d\\l(B)\geq
2}}\frac{\chi_{(a|b)}(C_B)}{z_B}\prod_{j=1}^{l(B)}\frac{t^{B_j}-t^{-B_j}}{q^{B_j}-q^{-B_j}}
\end{align}
\begin{align}
W_{(a|b)}(q,t)&=q^{-mnd(a-b)}t^{-n(m-1)d}\sum_{B}\frac{\chi_{(a|b)}(C_B)}{z_B}\left(\sum_{\mu}\chi_{\mu}(C_{mB})
\chi_{\mu}(C_{(md)})q^{\frac{n}{m}k_\mu}\right.\\\nonumber
&\left.\cdot
\frac{1}{md}\frac{t^{md}-t^{-md}}{q^{md}-q^{-md}}+\sum_{\mu}\chi_{\mu}(C_{mB})\sum_{l(\nu)\geq
2}\frac{\chi_{\mu}(C_\nu)}{z_\nu}q^{\frac{n}{m}k_\mu}\prod_{j=1}^{l(\nu)}\frac{t^{\nu_j}-t^{-\nu_j}}{q^{\nu_j}-q^{-\nu_j}}
\right)
\end{align}
Using the L'H$\hat{\text{o}}$pital's rule and the formulas (6.39)
and (6.40), we obtain
\begin{align}
\lim_{t\rightarrow
1}\frac{W_{(a|b)}(q,t)}{s^*_{(a|b)}(q,t)}=\frac{\lim_{t\rightarrow
1}\frac{dW_{(a|b)}(q,t)}{dt}}{\lim_{t\rightarrow
1}\frac{ds^*_{(a|b)}(q,t)}{dt}}=\frac{(q^{mnd}-q^{-mnd})(q-q^{-1})}{(q^{md}-q^{-md})(q^{nd}-q^{-nd})}=\Delta_{T(n,m)}(q^{d})
\end{align}
\end{proof}

\begin{example}
The Alexander polynomial of torus knot $T(2,2k+1)$ is given by
\begin{align}
\Delta_{T(2,2k+1)}(q)=\frac{q^{2k+1}-q^{-2k-1}}{q-q^{-1}}.
\end{align}
According to the formulas (5.13),(5.14) and (5.15). We have
\begin{align}
\lim_{t\rightarrow
1}\frac{W_{(1)}(q,t)}{s^*_{(1)}(q,t)}&=\Delta_{T(2,2k+1)}(q)\\
\lim_{t\rightarrow
1}\frac{W_{(2)}(q,t)}{s^*_{(2)}(q,t)}&=\Delta_{T(2,2k+1)}(q^2)\\
\lim_{t\rightarrow
1}\frac{W_{(11)}(q,t)}{s^*_{(11)}(q,t)}&=\Delta_{T(2,2k+1)}(q^2)
\end{align}
\end{example}

\begin{remark}
It is easy to show that the definition of
$\Delta^{\mathcal{K}}_{A}(t)$ can not be generalized to the case of
the links like the definition of special polynomial (6.5). This is
because the following limit
\begin{align}
\lim_{t\rightarrow
1}\frac{W_{\vec{A}}(\mathcal{L};q,t)}{W_{\vec{A}}(\bigcirc^{\otimes
L};q,t)}.
\end{align}
may not exist for a general link. Considering the Hopf link
$T(2,2)$, according to the formula (5.16), we have
\begin{align}
W_{(1),(1)}(q,t)=q^2s^*_{(2)}(q,t)+q^{-2}s^*_{(11)}(q,t),
\end{align}
But the limit
\begin{align}
\lim_{t\rightarrow
1}\frac{W_{(1),(1)}(q,t)}{s_{(1)}^*(q,t)s_{(1)}^*(q,t)}
\end{align}
does not exists for general $t$ by a direct calculation.
\end{remark}

\section{Symmetries}
In this section, we give some symmetric properties of the colored
HOMFLY polynomial.
\begin{theorem}
Given a link $\mathcal{L}$ with $L$ components, and
$\vec{A}=(A^1,..,A^L)\in \mathcal{P}^L$, we have
\begin{align}
W_{\vec{A}}(\mathcal{L};-q^{-1},t)=(-1)^{\sum_{\alpha}k_{A^\alpha}}W_{\vec{A}^t}(\mathcal{L};q,t)
\end{align}
\end{theorem}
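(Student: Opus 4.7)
The plan is to trace the action of the substitution $\phi\colon q\mapsto-q^{-1}$, $t\mapsto t$ through the three ingredients of the defining formula (4.3): the pattern $Q_{A^\alpha}$, the framed bracket, and the scalar correction $q^{-k_{A^\alpha}w(\mathcal{K}_\alpha)}t^{-|A^\alpha|w(\mathcal{K}_\alpha)}$. The key structural observation is that $\phi$ fixes both $z=q-q^{-1}$ and $\delta=(t-t^{-1})/(q-q^{-1})$, so every local relation defining the framed HOMFLY skein is $\phi$-invariant. Consequently the evaluation map $\langle\cdot\rangle$ is $\phi$-equivariant: applying $\phi$ to the scalar $\langle D\rangle\in\Lambda$ is the same as evaluating the diagram whose $\Lambda$-coefficients have been sent through $\phi$.

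Next I would show $\phi(Q_\lambda)=Q_{\lambda^t}$. From (3.16)--(3.17) one reads off $\phi(a_m)=b_m$ directly, since $(-q^{-1})^{l(\pi)}=(-q)^{-l(\pi)}$. The normalizing scalar $\beta_m$ is by definition $\alpha_m|_{q\to-q^{-1}}=\phi(\alpha_m)$, so $\phi(h_m)=e_m$. Applying $\phi$ to the Jacobi--Trudi determinant $Q_\lambda=\det(h_{\lambda_i-i+j})$ gives $\det(e_{\lambda_i-i+j})$, which by the dual Jacobi--Trudi identity (3.14) is precisely $Q_{\lambda^t}$.

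For the framing prefactor, the evenness of $k_{A^\alpha}$ (noted right after (3.6)) gives $\phi(q^{-k_{A^\alpha}w(\mathcal{K}_\alpha)})=(-q^{-1})^{-k_{A^\alpha}w(\mathcal{K}_\alpha)}=q^{k_{A^\alpha}w(\mathcal{K}_\alpha)}$, and $k_{\lambda^t}=-k_\lambda$ then rewrites this as $q^{-k_{(A^\alpha)^t}w(\mathcal{K}_\alpha)}$; the $t$-factor is unchanged. Assembling the three pieces in (4.3) produces the clean identity $W_{\vec{A}}(\mathcal{L};-q^{-1},t)=W_{\vec{A}^t}(\mathcal{L};q,t)$. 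The sign $(-1)^{\sum_\alpha k_{A^\alpha}}$ in the statement is tautologically $1$ because each $k_{A^\alpha}$ is even, so it can be inserted for free to match the formulation given in the theorem.

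I do not anticipate a genuine obstacle here; the argument is essentially bookkeeping once the right framework is set up. The two steps needing a little care are (i) articulating the $\phi$-equivariance of the bracket cleanly, by observing that every scalar introduced while reducing a diagram comes from the $\phi$-fixed set $\{z,\delta,t^{\pm 1}\}$, and (ii) invoking the second Jacobi--Trudi formula in exactly the form $s_{\lambda^t}=\det(e_{\lambda_i-i+j})_{1\leq i,j\leq l(\lambda)}$ so as to identify $\det(e_{\lambda_i-i+j})$ with $Q_{\lambda^t}$.
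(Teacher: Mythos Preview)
Your proposal is correct and follows essentially the same route as the paper: both arguments use $\phi(a_m)=b_m$ (hence $\phi(h_m)=e_m$) together with the two Jacobi--Trudi expressions to obtain $\langle\mathcal{K}\star Q_\lambda\rangle=\langle\mathcal{K}\star Q_{\lambda^t}\rangle_{q\to -q^{-1}}$, and then handle the framing prefactor via $k_\lambda=-k_{\lambda^t}$ and the evenness of $k_\lambda$. Your explicit articulation of the $\phi$-equivariance of the bracket (since $z$, $\delta$, and $t^{\pm1}$ are all $\phi$-fixed) is a slightly cleaner way of packaging the paper's passage from (7.4) to (7.5), but the content is the same.
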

\begin{proof}
For simplicity, we just to show the proof for a given knot
$\mathcal{K}$, it is easy to write the proof for a general link
$\mathcal{L}$ similarly. Since
\begin{align}
Q_{(d)}&=\frac{1}{\alpha_d}\sum_{\pi\in
S_d}q^{l(\pi)}\widehat{\omega_\pi},\\
Q_{(d)^t}&=\frac{1}{\beta_{d}}\sum_{\pi\in
S_d}(-q)^{-l(\pi)}\widehat{\omega_\pi}.
\end{align}
 where
$\alpha_d=\beta_d|_{q\rightarrow -q^{-1}}$ by the definition in
Section 3.3. Hence
\begin{align}
Q_{(d)}=Q_{(d)^t}|_{q\rightarrow -q^{-1}}.
\end{align}
It follows that
\begin{align}
\langle\mathcal{K}\star Q_{(d^1)}Q_{(d^2)}\cdots
Q_{(d^l)}\rangle=\langle\mathcal{K}\star
Q_{(d^1)^t}Q_{(d^2)^t}\cdots Q_{(d^l)^t}\rangle_{q\rightarrow
-q^{-1}}
\end{align}

Given a partition $\lambda$ with length $l(\lambda)=l$. By the
formula (3.19), we get
\begin{align}
Q_\lambda=\det(h_{\lambda_i+j-i})=\det(Q_{(\lambda_i+j-i)})=\sum_{\tau\in
S_l}(-1)^{sign(\tau)}Q_{(\lambda_1+\tau(1)-1)}\cdots
Q_{(\lambda_l+\tau(l)-l)}.
\end{align}
\begin{align}
Q_{\lambda^t}=\det(e_{\lambda_i+j-i})=\det(Q_{(\lambda_i+j-i)^t})=\sum_{\tau\in
S_l}(-1)^{sign(\tau)}Q_{(\lambda_1+\tau(1)-1)^t}\cdots
Q_{(\lambda_l+\tau(l)-l)^t}.
\end{align}
Thus,
\begin{align}
\langle \mathcal{K}\star Q_\lambda \rangle=\langle \mathcal{K}\star
Q_{\lambda^{t}} \rangle_{q\rightarrow -q^{-1}}.
\end{align}
is a consequence of the formula (7.5).

Moreover, by the definition (3.6), $k_{\lambda}$ is an even integer
for any partition $\lambda$ and
\begin{align}
k_{\lambda}=-k_{\lambda^{t}}.
\end{align}
We obtain
\begin{align}
W_{\lambda}(\mathcal{K};q,t)=(-1)^{k_{\lambda}}W_{\lambda^t}(\mathcal{K};-q^{-1},t).
\end{align}
\end{proof}

\begin{theorem}
Given a link $\mathcal{L}$ with $L$ components, and
$\vec{A}=(A^1,..,A^L)\in \mathcal{P}^L$, we have the following
symmetry:
\begin{align}
W_{\vec{A}}(\mathcal{L};q^{-1},t)=(-1)^{\|\vec{A}\|}W_{\vec{A}^{t}}(\mathcal{L};q,t)
\end{align}
\end{theorem}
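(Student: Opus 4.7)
The plan is to derive Theorem 7.2 from the preceding Theorem 7.1 via the substitution $q\mapsto -q$, exploiting the identity $q^{-1}=-(-q)^{-1}$. Replacing $q$ by $-q$ throughout Theorem 7.1 turns the left-hand side into $W_{\vec{A}}(\mathcal{L};q^{-1},t)$ and the right-hand side into $(-1)^{\sum_\alpha k_{A^\alpha}}W_{\vec{A}^{t}}(\mathcal{L};-q,t)$. Since every $k_{A^\alpha}$ is even, the task reduces to establishing the auxiliary symmetry
\[
W_{\vec{A}}(\mathcal{L};-q,t)=(-1)^{\|\vec{A}\|}W_{\vec{A}}(\mathcal{L};q,t),
\]
after which applying it to $\vec{A}^{t}$ (with $\|\vec{A}^{t}\|=\|\vec{A}\|$) finishes the proof.

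For this auxiliary identity I will first record a consequence of the Lickorish--Millett expansion (2.7): for any link $\mathcal{L}'$ with $L'$ components one has $\langle\mathcal{L}'\rangle|_{q\to -q}=(-1)^{L'}\langle\mathcal{L}'\rangle$, since every exponent of $q-q^{-1}$ occurring in $\langle\mathcal{L}'\rangle$ is congruent to $L'$ modulo $2$. Next I expand the decoration using the Frobenius formula (3.20) together with $\{m\}P_m=X_m$ from (3.21):
\[
\otimes_\alpha Q_{A^\alpha}=\sum_{\vec{B}}\prod_\alpha\frac{\chi_{A^\alpha}(C_{B^\alpha})}{z_{B^\alpha}}\cdot\prod_{\alpha,i}\{B^\alpha_i\}^{-1}\cdot\otimes_\alpha\prod_i X_{B^\alpha_i}.
\]
A direct calculation shows that the braid word defining $A_{i,j}$ has underlying permutation equal to a single $m$-cycle for $m=i+j+1$, so each $X_m$ is a one-component pattern in the annulus, and consequently the satellite $\mathcal{L}\star\otimes_\alpha\prod_i X_{B^\alpha_i}$ has exactly $\sum_\alpha l(B^\alpha)$ components.

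Now $\{m\}|_{q\to -q}=(-1)^{m-1}\{m\}$, so under $q\mapsto -q$ the product $\prod_{\alpha,i}\{B^\alpha_i\}^{-1}$ acquires the sign $(-1)^{\sum_\alpha(|B^\alpha|-l(B^\alpha))}$, while the bracket $\langle\mathcal{L}\star\otimes_\alpha\prod_i X_{B^\alpha_i}\rangle$ acquires $(-1)^{\sum_\alpha l(B^\alpha)}$ by the first observation. The two exponents sum to $\sum_\alpha|B^\alpha|=\|\vec{A}\|$, a sign independent of $\vec{B}$, and consequently
\[
\langle\mathcal{L}\star\otimes_\alpha Q_{A^\alpha}\rangle|_{q\to -q}=(-1)^{\|\vec{A}\|}\langle\mathcal{L}\star\otimes_\alpha Q_{A^\alpha}\rangle.
\]
The framing prefactor $q^{-\sum_\alpha k_{A^\alpha}w(\mathcal{K}_\alpha)}$ in the definition (4.3) is invariant under $q\mapsto -q$ because each $k_{A^\alpha}$ is even, and the auxiliary symmetry follows.

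The main obstacle is the sign bookkeeping in the auxiliary identity: one must simultaneously invoke Lickorish--Millett to read off the parity of the bracket and track how each $\{m\}$ coming from $P_m=X_m/\{m\}$ transforms. The crucial cancellation $(|B^\alpha|-l(B^\alpha))+l(B^\alpha)=|B^\alpha|$ rests on each $X_m$ being a one-component annular pattern, i.e.\ on the explicit geometric structure of the Turaev-style element $X_m$. Once this uniform sign is verified, no further identities beyond those already recorded in the paper are needed.
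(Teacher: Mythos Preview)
Your proof is correct and takes a genuinely different route from the paper's. The paper establishes Theorem 7.2 directly, independently of Theorem 7.1: it expands $Q_{(d)}$ and $Q_{(d)^t}$ through positive permutation braids $\omega_\pi$, applies the Lickorish--Millett parity (as you also do), and then invokes a separate combinatorial lemma asserting $l(\pi)+l(c(\pi))\equiv d\pmod 2$ for every $\pi\in S_d$; the passage from products of row/column $Q$'s to a general $Q_\lambda$ is made via the Jacobi--Trudi determinant. You instead bootstrap from Theorem 7.1 via the substitution $q\mapsto -q$, reducing the problem to the auxiliary symmetry $W_{\vec{A}}(\mathcal{L};-q,t)=(-1)^{\|\vec{A}\|}W_{\vec{A}}(\mathcal{L};q,t)$, and prove the latter by expanding through the Frobenius formula and the power-sum elements $P_m=X_m/\{m\}$, exploiting that each summand $A_{i,j}$ of $X_m$ is the closure of a single $m$-cycle and hence one-component. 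This sidesteps both the inductive permutation-length lemma and the Jacobi--Trudi identity, at the cost of relying on Theorem 7.1. One phrasing quibble: $X_m$ is not itself ``a one-component pattern'' but an integer-linear sum of such; your argument is nonetheless sound, since you correctly identify the underlying permutation of each $A_{i,j}$ and it is to the individual summands that the Lickorish--Millett parity is applied.
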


\begin{proof}
For simplicity, we just to show the proof for a given knot
$\mathcal{K}$, it is easy to write the proof for a general link
$\mathcal{L}$ similarly. Given a permutation $\pi\in S_d$, we denote
$c(\pi)$ the cycle type of $\pi$ which is a partition. It is easy to
see that the number of the components of the link
$\widehat{\omega_\pi}$ is equal to $l(c(\pi))$. Thus the number of
the components of the link $\mathcal{K}\star
\widehat{\omega_{\pi_1}}\cdots\widehat{\omega_{\pi_l}}$ is equal to
\begin{align}
L(\mathcal{K}\star
\widehat{\omega_{\pi_1}}\cdots\widehat{\omega_{\pi_l}})=\sum_{i=1}^l
l(c(\pi_i)).
\end{align}

Before to proceed, we prove the following lemma firstly.
\begin{lemma}
Given a permutation $\pi\in S_d$, we have the following identity
\begin{align}
l(\pi)+l(c(\pi))=d\mod 2
\end{align}
\end{lemma}
\begin{proof}
For every permutation $\pi\in S_d$, its length $l(\pi)$ can be
obtained by calculating the minimal number of the crossings in the
positive braid $\omega_\pi$.  When $d=2$, we have
$S_2=\{\pi_1=(1)(2), \pi_2=(12)\}$. Hence $c(\pi_1)=(11)$, and
$c(\pi_2)=(2)$. It is clear that
\begin{align}
l(\pi_1)+l(c(\pi_1))=l(\pi_2)+l(c(\pi_2))=2.
\end{align}
So Lemma 7.3 holds when $d=2$.

Now we assume Lemma 7.3 holds for $d\leq n-1$. Given a permutation
$\pi\in S_{n}$. We first consider the special case, when $\pi$ has
the cycle form: $\pi=\pi'(n)$, where $\pi'$ is a permutation in
$S_{n-1}$. It is easy to see that $l(\pi)=l(\pi')$ and
$l(c(\pi))=l(c(\pi'))+1$. By the induction hypothesis, we have
\begin{align}
l(\pi')+l(c(\pi'))=n-1\mod 2
\end{align}
Thus we get
\begin{align}
l(\pi)+l(c(\pi))=n \mod 2.
\end{align}
Thus Lemma 7.3 holds for $\pi\in S_n$ with the cycle form $\pi'(n)$,
$\pi'\in S_{n-1}$.

For the general case, we can assume $\pi$ has the cycle form
$\pi=\sigma \tau$, where $\tau$ is the cycle containing the element
$n$ as the form $(i_1\cdots i_j n)$ for $\{i_1,..i_j\}\subset
\{1,..,n-1\}$, $1\leq j\leq n-1$, and $\sigma$ is a cycle in
$S_{n-j-1}$. Hence
\begin{align}
l(c(\pi))=l(c(\sigma))+l(c(\tau)).
\end{align}
By the property of the permutation, the number of the crossings
between $\omega_\sigma$ and $\omega_\tau$ must be an even number,
thus
\begin{align}
l(\pi)=l(\sigma)+l(\tau)+\text{even number}.
\end{align}
Combining (7.17), (7.18) and the induction hypothesis, we have
\begin{align}
l(\pi)+l(c(\pi))=n\mod 2.
\end{align}
So, we finish the proof of Lemma 7.3.
\end{proof}
We now proceed to prove Theorem 7.2. By the definition of $Q_d$, we
have
\begin{align}
&\langle\mathcal{K}\star Q_{(d^1)}Q_{(d^2)}\cdots
Q_{(d^l)}\rangle\\\nonumber&=\prod_{i=1}^l\frac{1}{\alpha_{d^i}}\sum_{\pi_i\in
S_{d^{i}}}q^{\sum_{i=1}^ll(\pi_i)}\langle\mathcal{K}\star
\widehat{\omega_{\pi_1}}\cdots\widehat{\omega_{\pi_l}}\rangle\\\nonumber
&=\prod_{i=1}^l\frac{1}{\alpha_{d^i}}\sum_{\pi_i\in
S_{d^i}}\hat{p}^{\mathcal{K}\star \widehat{\omega_{\pi_1}}\cdots
\widehat{\omega_{\pi_l}}}_{2g+1-\sum_{i=1}^l
l(c(\pi_i))}(t)(q-q^{-1})^{2g-\sum_{i=1}^l
l(c(\pi_i))}q^{\sum_{i=1}^{l}l(\pi_i)}
\end{align}
\begin{align}
&\langle\mathcal{K}\star Q_{(d^1)^t}Q_{(d^2)^t}\cdots
Q_{(d^l)^t}\rangle\\\nonumber&=\prod_{i=1}^l\frac{1}{\beta_{d^i}}\sum_{\pi_i\in
S_{d^{i}}}(-q)^{-\sum_{i=1}^ll(\pi_i)}\langle\mathcal{K}\star
\widehat{\omega_{\pi_1}}\cdots\widehat{\omega_{\pi_l}}\rangle\\\nonumber
&=\prod_{i=1}^l\frac{1}{\beta_{d^i}}\sum_{\pi_i\in
S_{d^i}}\hat{p}^{\mathcal{K}\star \widehat{\omega_{\pi_1}}\cdots
\widehat{\omega_{\pi_l}}}_{2g+1-\sum_{i=1}^l
l(c(\pi_i))}(t)(q-q^{-1})^{2g-\sum_{i=1}^l
l(c(\pi_i))}(-q)^{-\sum_{i=1}^{l}l(\pi_i)}
\end{align}

By the definition of $\alpha_d$ and $\beta_d$ as showed in Section
3.3, we have
\begin{align}
\alpha_d=\beta_{d}|_{q\rightarrow -q^{-1}}=\beta_d|_{q\rightarrow
q^{-1}}.
\end{align}
Finally, according to Lemma 7.3 and the formulas (7.20), (7.21) and
(7.22), one has
\begin{align}
\langle\mathcal{K}\star Q_{(d^1)}Q_{(d^2)}\cdots
Q_{(d^l)}\rangle=(-1)^{d^1+\cdots d^l}\langle\mathcal{K}\star
Q_{(d^1)^t}Q_{(d^2)^t}\cdots Q_{(d^l)^t}\rangle_{q\rightarrow
q^{-1}}
\end{align}
By the definition of $Q_\lambda$ and $Q_{\lambda^t}$ as showed by
formulas (7.6) and (7.7), we obtain
\begin{align}
\langle \mathcal{K}\star Q_\lambda \rangle=(-1)^{|\lambda|}\langle
\mathcal{K}\star Q_{\lambda^{t}} \rangle_{q\rightarrow q^{-1}}.
\end{align}
Since $k_{\lambda}=-k_{\lambda^t}$, the identity
\begin{align}
W_{\lambda}(\mathcal{K};q,t)=(-1)^{|\lambda|}W_{\lambda^t}(\mathcal{K};q^{-1},t).
\end{align}
follows immediately.
\end{proof}

\end{document}